\documentclass[a4paper, 10pt]{amsart}
\usepackage{amsmath}
\usepackage{amsthm}
\usepackage{amssymb}
\usepackage{framed}
\usepackage[hidelinks]{hyperref}
\usepackage{xcolor}
\usepackage{graphicx}
\usepackage[a4paper,  margin=2.5cm]{geometry}

\newtheorem{de}{Definition}
\newtheorem{ex}{Example}

\newtheorem{pr}{Proposition}
\newtheorem{lm}{Lemma}
\newtheorem{co}{Corollary}
\newtheorem{te}{Theorem}
\newtheorem{re}{Remark}
\theoremstyle{definition}

\newcommand{\dif}{\mathrm{d}}
\newcommand{\ii}{\mathrm{i}}

\newcommand{\abs}[1]{\vert #1 \vert}
\newcommand{\ov}{\overline}

\newcommand{\CC}{\mathbb{C}}

\newcommand{\RR}{\mathbb{R}}
\newcommand{\ZZ}{\mathbb{Z}}
\newcommand{\NN}{\mathbb{N}}
\newcommand{\Ss}{\mathbb{S}}
\newcommand{\Tt}{\mathbb{T}}

\DeclareMathOperator{\di}{div}
\DeclareMathOperator{\cu}{curl}

\DeclareMathOperator{\Ric}{Ric}

\begin{document}
\title[Contact structures and Beltrami fields]{Contact structures and Beltrami fields on the torus\\ and the sphere}

\author{D. Peralta-Salas}
\address{Instituto de Ciencias Matem\'aticas, Consejo Superior de
 Investigaciones Cient\'\i ficas, 28049 Madrid, Spain}
\email{dperalta@icmat.es}

\author{R. Slobodeanu}
\address{Faculty of Physics, University of Bucharest, P.O. Box Mg-11, RO--077125 Bucharest-M\u agurele, Romania}
%\address{Institute of Mathematics, University of Neuch\^atel, 11 rue Emile Argand, 2000 Neuch\^atel, Switzerland.}
\email{radualexandru.slobodeanu@g.unibuc.ro}

%\date{\today}

%\maketitle

%\bigskip

\begin{abstract}
\noindent We show the existence of overtwisted contact structures on the (round)
3-sphere and the (flat) 3-torus for which the ambient metric is weakly compatible. Our proofs are based on the construction of $\mathbb S^1$-invariant nonvanishing curl eigenfields on $\mathbb S^3$ and $\mathbb T^3$ using suitable families of Jacobi polynomials and eigenfunctions of the Laplacian with contractible nodal sets, respectively. As a consequence, we show that the contact sphere theorem of Etnyre, Komendarczyk and Massot (2012) does not hold for weakly compatible metrics as it was suggested. Using ideas from the theory of harmonic maps, we also provide an alternative proof of a theorem of Gluck and Gu (2001) on the geometric rigidity of tight contact structures on the $3$-sphere, i.e., any contact form in $\mathbb S^3$ admitting a compatible metric that is the round one is isometric, up to a constant factor, to the standard (tight) contact form.
\end{abstract}

\maketitle

\section{Introduction}\label{S:intro}
Since 1889 when Eugenio Beltrami highlighted them in his paper ``Considerations on Hydrodynamics'', the divergence-free vector fields $V$ aligned with their own curl, i.e.
$$
\cu V=fV
$$
for some function $f$, have found far reaching applications in magnetohydrodynamics~\cite{Marsh,reed}, and in the mathematical analysis of the Euler and Navier-Stokes equations~\cite{Const}. Known also as force-free magnetic fields in solar physics, or helical flows in fluid mechanics, Beltrami fields are used to describe equilibria in magnetic relaxation processes~\cite{Moff}, as building blocks to construct time-dependent solutions exhibiting dissipation~\cite{delellis} and vortex reconnections~\cite{eps}, or as special steady fluid flows with complex dynamics~\cite{Annals, Acta}, able to provide us with insights about Lagrangian turbulence. We refer to~\cite{arn,ps}, for a broader view on (topological) hydrodynamics and the role of Beltrami fields.

On a Riemannian 3-manifold $(M,g)$, the 1-form $\alpha$ that is dual (using the metric) to a \emph{Beltrami field} $V$ satisfies:
\begin{equation}\label{defbelt}
\ast \dif  \alpha = f \alpha\,, \qquad \delta \alpha =0\,,
\end{equation}
for some (say $C^\infty$) function $f: M \to \RR$, where $\ast$ is the Hodge star operator and $\delta = \ast \dif \ast$ is the codifferential. From the first equation it follows that
$$\alpha \wedge \dif \alpha=f\abs{\alpha}^2 \mathrm{vol}_g\,,$$
so, in particular, if $\alpha$ and $f$ are nowhere vanishing, then $\alpha \wedge \dif \alpha \neq 0$, condition well-known to geometers as defining a \emph{contact form} on $M$. In this case we say that the contact structure defined by the kernel of $\alpha$ is associated with the Beltrami field $V$. Conversely, as proven in~\cite{etn}, for any contact form  its associated Reeb field is a Beltrami field (with constant proportionality factor) with respect to some Riemannian metric on $M$. This correspondence gives rise to a fruitful interplay between contact geometry and hydrodynamics, with important benefits for both fields~\cite{EG00,CMPP}.

A major goal in contact topology is the understanding of overtwisted and tight contact structures, the former being defined in terms of the existence of an embedded disk which, along its boundary, is tangent to the contact planes. On closed oriented 3-manifolds the overtwisted contact structures are classified (up to contact isotopy) by the homotopy classes of their 2-plane fields~\cite{Eli}. In view of the contact-Beltrami correspondence mentioned above, it is then natural to ask which type of contact structures (tight or overtwisted) are associated with nonvanishing Beltrami fields on a given Riemannian 3-manifold (Question $7.4$ in \cite{kom}), and which homotopy classes (of 2-plane fields associated to contact structures) can be represented by them. This turns out to be related to the compatibility between the metric and the contact structure, defined as (see~\cite{ekm}):

\begin{de}\label{defcomp}
Let $(M, \alpha)$ be a contact $3$-manifold. A Riemannian metric $g$ on $M$ is called \emph{weakly compatible} with the contact structure if there exists a positive function $f$ on $M$ such that $\ast \dif \alpha  = f \alpha$ (where $\ast$ is computed using the metric $g$). If, in addition, $\abs{\alpha}=1$ and $f$ is a positive constant, then $g$ is called \emph{compatible} with the contact structure.
\end{de}

We recall that the notion of compatible metric was first introduced in~\cite{ch} by requiring that $\ast \dif \alpha  = 2 \alpha$ and $\abs{\alpha}=1$ (so $f\equiv 2$ above), and in this case we recover also the definition of \emph{associated metric}, customarily used in contact geometry, see e.g.~\cite{blair}.

We see from Definition~\ref{defcomp} that if $V$ is a nonvanishing curl eigenfield on a Riemannian manifold $(M,g)$, i.e.,
$$
\cu V=\lambda V
$$
for some constant $\lambda \neq 0$, (so a Beltrami field with constant proportionality factor), the metric $g$ is weakly compatible with the contact structure defined by the dual $1$-form $\alpha$. If moreover $\abs{V}=1$, then $g$ is compatible.

The curl eigenfields define, on any closed $3$-manifold, an $L^2$-basis for the exact divergence-free vector fields (in particular, the spectrum of $\cu$ is discrete). However, it is widely open if there exist nonvanishing curl eigenfields on any $(M,g)$, or even assuming their existence, it is not known with which contact structures (tight or overtwisted) and homotopy classes they are compatible. (The only general result~\cite{TAMS} is that for a generic metric, the zero set of the curl eigenfields consists of hyperbolic points.) Partial results have been proved recently concerning geometric restrictions for the existence of compatible metrics: the \emph{Contact Sphere Theorem} proved in~\cite{ekm} and, with improved pinching constant, in \cite{ge}.
\begin{te}[Etnyre, Komendarczyk and Massot, 2012]\label{T:pinch}
Let $(M,\alpha)$ be a closed contact 3-manifold. If there exists a compatible metric $g$ with pinched sectional curvature $$0<\frac{4}{9}k_{max}<\mathrm{sec}(g)\leq k_{max}\,,$$
then the contact structure is tight and the manifold is covered by a $3$-sphere.
\end{te}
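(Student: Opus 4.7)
The plan is to separate the theorem into its two conclusions and handle them with different tools. For the topological claim, I would invoke the classical sphere theorem: the hypothesis $\frac{4}{9}k_{max}<\mathrm{sec}(g)\leq k_{max}$ is a strict strengthening of quarter-pinching, and the universal cover $\tilde M$ inherits the same pinched metric. Being simply connected and strictly positively, quarter-pinched, $\tilde M$ is homeomorphic to $\Ss^3$ by Berger--Klingenberg (and in fact diffeomorphic, by Brendle--Schoen), so $M$ is covered by the $3$-sphere.

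For the tightness assertion, I would first observe that tightness is preserved under lifts: an embedded overtwisted disk in $(M,\alpha)$ pulls back to an embedded overtwisted disk in $(\tilde M,\tilde\alpha)$, so it suffices to rule out overtwisted disks on the simply connected cover $\tilde M \cong \Ss^3$. By Eliashberg's classification, tight contact structures on $\Ss^3$ are unique up to isotopy, so the ultimate goal is to recognise the lifted contact structure as (isotopic to) the standard one. The bridge between the Riemannian and contact data is the following observation: when $g$ is compatible in the sense of Definition~\ref{defcomp}, the dual Reeb field $V=\alpha^\sharp$ is a \emph{unit geodesic field}, i.e.\ $\nabla_V V=0$. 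This follows from $\abs{V}=1$ and $\iota_V \dif\alpha = 0$ via a short Koszul computation: $\dif\alpha(V,X)=V\cdot g(V,X)-g(V,[V,X])=g(\nabla_V V,X)+g(V,\nabla_X V)=g(\nabla_V V,X)$, since $\abs{V}^2$ is constant. Consequently the Reeb orbits are unit-speed $g$-geodesics.

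With this in hand, I would pass to $\tilde M$ and combine the pinching with Rauch and Myers comparison along Reeb orbits. Myers' theorem gives a uniform diameter bound $\mathrm{diam}(\tilde M)\leq \pi/\sqrt{\tfrac{4}{9}k_{max}}=\tfrac{3\pi}{2\sqrt{k_{max}}}$, while the two-sided curvature pinching controls conjugate-point distances along the Reeb geodesics from above and below. The plan is then to use these comparison estimates to show that the Reeb flow on $\Ss^3$ is sufficiently close to the Hopf flow of the round metric to preclude the existence of an overtwisted disk: its Legendrian boundary would need to have length and geometric complexity bounded below by contact-topological data (a vanishing Thurston--Bennequin invariant) but above by the Myers bound, yielding a contradiction. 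The hardest step, and the main obstacle of this approach, is precisely this last translation: converting the Riemannian-geometric control on the geodesic Reeb flow into a quantitative contact-topological obstruction, and doing so sharply enough for the constant $\frac{4}{9}$ to be the right threshold. This is where either a delicate characteristic-foliation analysis on a hypothetical overtwisted disk, or a classification argument identifying $\alpha$ with a rescaling of the standard contact form on $\Ss^3$, has to carry the proof.
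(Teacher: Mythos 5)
A point of order first: the paper does not prove Theorem~\ref{T:pinch} at all --- it is quoted from Etnyre--Komendarczyk--Massot~\cite{ekm} (with the improved constant in~\cite{ge}) and used as a black box, so there is no internal proof to compare yours against. Judged on its own terms, your proposal correctly handles everything except the actual content of the theorem. The topological conclusion is fine: $\tfrac49$-pinching is stronger than quarter-pinching, the universal cover is compact and simply connected by Bonnet--Myers, and the sphere theorem (in dimension $3$ one can even invoke Hamilton's theorem on positive Ricci curvature) shows $M$ is covered by $\Ss^3$. The reduction of tightness to universal tightness is correct, and so is your Koszul computation showing $\nabla_V V=0$, i.e.\ that the Reeb orbits of a compatible metric are unit-speed geodesics.

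The genuine gap is the tightness argument on the cover, which you explicitly leave open. The plan you sketch --- showing the Reeb flow is ``close to the Hopf flow'' via Rauch/Myers comparison, and playing a length lower bound for the Legendrian boundary of an overtwisted disk (attributed to the Thurston--Bennequin invariant) against the Myers diameter bound --- does not correspond to a workable argument: the Thurston--Bennequin invariant imposes no metric lower bound on the length of a Legendrian unknot, and soft closeness of the Reeb flow to the Hopf flow cannot detect overtwistedness. Indeed, the very point of the present paper is that the round $\Ss^3$ carries \emph{overtwisted} structures for which it is weakly compatible, so any proof must exploit $\abs{\alpha}=1$ quantitatively beyond the geodesibility of Reeb orbits. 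The proof in~\cite{ekm} rests on a key lemma absent from your outline: a \emph{tightness radius} estimate, a quantitative Darboux-ball theorem asserting that geodesic balls of radius below $\min\{\mathrm{conv}(g),\,\pi/(2\sqrt{k_{max}})\}$ are tight, which is then combined with Klingenberg's injectivity radius estimate under the $\tfrac49$-pinching so that the simply connected cover is exhausted by a single tight ball. Supplying that lemma (or an equivalent) is precisely the hard step your proposal defers, so the argument as written does not close.
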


This remarkable result implies, in particular, that any curl eigenfield with constant norm on a Riemannian manifold with pinched sectional curvature defines a tight contact structure. It is then natural to ask if the same result holds for any nonvanishing curl eigenfield. In this direction,  in~\cite[Remark 1.6]{ekm} it was suggested that Theorem~\ref{T:pinch} might also hold for weakly compatible metrics: ``it is reasonable to hope that such a theorem (the contact sphere theorem) holds for weakly compatible metrics, but possibly with weaker bounds''.

Our goal in this paper is to analyze the type of contact structures (tight or overtwisted, and homotopy classes) associated with nonvanishing curl eigenfields on $\mathbb S^3$ (endowed with the round metric) and $\mathbb T^3$ (endowed with the flat metric). This is tantamount to studying contact structures on $\mathbb S^3$ or $\mathbb T^3$ whose weakly compatible metrics are the canonical ones on these manifolds (actually, in this setting of curl eigenfields, the notion of weak compatibility is slightly stronger than in Definition~\ref{defcomp} because the proportionality factor is constant). In what follows we will always assume that $\mathbb S^3$ and $\mathbb T^3$ are endowed with their canonical metrics.

Our first main result concerns curl eigenfields on $\mathbb S^3$. We recall that the spectrum of $\cu$ is given by $\lambda=\pm(k+2)$ where $k$ is a nonnegative integer. It is well known that the Reeb field of the standard (resp. anti-standard) contact form on $\mathbb S^3$ is a curl eigenfield with $\lambda=2$ (resp. $\lambda=-2$). These fields are known as the Hopf and anti-Hopf fields, respectively (for details see Section~\ref{S:sphere}). In fact, any curl eigenfield with the lowest eigenvalue $\lambda=2$ (resp. $\lambda=-2$) is isometric (possibly up to a constant proportionality factor) to the Hopf (resp. anti-Hopf) field. The following theorem provides a partial answer to the problem of characterizing those contact structures (including their homotopy classes) associated with curl eigenfields on $\mathbb S^3$; in the statement we use the concept of \emph{Hopf class}, which is introduced in Section~\ref{S:sphere}.

\begin{te} \label{maint}
Any nonvanishing curl eigenfield on $\mathbb S^3$ has even eigenvalue $\lambda=2m$, $m\in\{\pm 1,\pm 2, \cdots\}$. Moreover, for each $|m|\geq 2$ there exists a nonvanishing curl eigenfield $V_m$ whose associated contact structure is overtwisted. The homotopy classes of the corresponding contact plane fields have Hopf invariant
$$
\text{Hopf invariant}=\frac{\text{sign}(m)(-1)^{m+1}-1}{2}\,.
$$
\end{te}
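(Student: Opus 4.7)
The three claims will be addressed in sequence: parity of the eigenvalue; existence of a nonvanishing $V_m$ for $|m|\geq 2$; and the Hopf-invariant formula (together with overtwistedness). The second and third rest on an explicit Jacobi-polynomial construction, while the first requires a topological obstruction.

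\textbf{Parity.} The spectrum of $\cu$ on $(\Ss^3, g_{\mathrm{round}})$ is $\{\pm(k+2)\}_{k\geq 0}$, so any nonvanishing curl eigenfield $V$ satisfies $\cu V = \lambda V$ with $\lambda = \pm(k+2)$. Its dual 1-form $\alpha = V^\flat$ is contact, and the plane field $\xi=\Ker\alpha$ has a well-defined Hopf invariant $h(\xi)\in\ZZ$ relative to the standard left-invariant parallelization of $T\Ss^3$. My plan is to compute $h(\xi)$ in terms of $\lambda$ using the helicity identity $\int_{\Ss^3}\alpha\wedge \dif\alpha = \lambda\Vert V\Vert_{L^2}^2$ and a Chern--Simons-type correction accounting for the non-unit length of $V$, and to argue that the resulting expression fails integrality when $k$ is odd. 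A representation-theoretic alternative is to exploit the antipodal involution $a:x\mapsto -x$: the $(k+2)$-eigenspace of $\cu$ is an irreducible $Spin(4)$-module on which $a^*$ acts by a definite sign depending on the parity of $k$, and for odd $k$ a degree argument applied to the vector field induced on $\mathbb{RP}^3$ produces a zero of $V$.

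\textbf{Construction of $V_m$.} For $|m|\geq 2$, I look for $V_m$ among $\Ss^1$-invariant fields under the Hopf action $(z_1,z_2)\mapsto(e^{\ii\theta}z_1,e^{\ii\theta}z_2)$. In Hopf coordinates $(\eta,\xi_1,\xi_2)$ with $z_1=\cos(\eta/2)e^{\ii\xi_1}$, $z_2=\sin(\eta/2)e^{\ii\xi_2}$, any such divergence-free field admits the ansatz
\[
V \,=\, a(\eta)\,\partial_{\xi_1} + b(\eta)\,\partial_{\xi_2},
\]
and $\cu V = 2m V$ reduces to a coupled linear ODE system in $(a,b)$ on $(0,\pi)$. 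Smoothness on $\Ss^3$ at the singular loci $\eta=0,\pi$ (where one of the Killing fields $\partial_{\xi_j}$ vanishes), combined with the ODE, identifies the admissible solutions as Jacobi polynomials $P_n^{(\alpha,\beta)}(\cos\eta)$ with indices determined by $m$, multiplied by appropriate powers of $\sin(\eta/2),\cos(\eta/2)$. The crucial step is to combine the two independent solutions so that $a(\eta),b(\eta)$ are never both zero on $(0,\pi)$, with $a(0)\neq 0$ and $b(\pi)\neq 0$, thereby producing a nonvanishing $V_m$.

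\textbf{Hopf invariant and overtwistedness.} With $V_m$ explicit, $h(\xi_m)$ is the degree of the Gauss map $V_m/\abs{V_m}:\Ss^3\to\Ss^2$ in the left-invariant trivialization. Because $V_m$ is in fact $\mathbb{T}^2$-invariant, the Gauss map is essentially determined by the winding of $(a(\eta),b(\eta))$ as $\eta$ traverses $[0,\pi]$ together with the twist from the Hopf fibration; tracking signs and node counts should yield $h(\xi_m)=(\mathrm{sign}(m)(-1)^{m+1}-1)/2$, with the parity factor $(-1)^{m+1}$ coming from the number of nodes of the underlying Jacobi polynomial and $\mathrm{sign}(m)$ recording the orientation of the Reeb flow. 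Overtwistedness of $\xi_m$ for $|m|\geq 2$ is then confirmed by exhibiting an explicit overtwisted disk inside an invariant torus of $V_m$, read off from the zero structure of $a,b$ on $(0,\pi)$; this distinguishes $\xi_m$ from the unique tight contact structures of $\Ss^3$, which correspond to $|m|=1$.

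\textbf{Main obstacle.} The parity assertion is the hardest part: the eigenspace for $\lambda=\pm(k+2)$ has dimension $(k+1)(k+3)$, so ruling out nonvanishing elements for odd $k$ demands a \emph{uniform} topological obstruction rather than a symmetry reduction. Both routes sketched above (Chern--Simons integrality or antipodal action) are plausible but entail nontrivial bookkeeping; by comparison, the construction of $V_m$ and the Hopf-invariant computation are essentially mechanical once the Jacobi ansatz is correctly normalized and the sign conventions are kept straight.
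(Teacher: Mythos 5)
Your architecture matches the paper's (topological obstruction for parity; $\Ss^1$-invariant Jacobi-polynomial ansatz for existence; invariance plus the structure of the coefficient functions for overtwistedness and the Hopf invariant), but the three genuinely hard steps are left unexecuted, and in two of them the route you sketch would not go through as described. For the parity claim, the helicity/Chern--Simons route is a dead end: $\int_{\Ss^3}\alpha\wedge\dif\alpha=\lambda\Vert V\Vert_{L^2}^2$ scales under rescaling of $V$ and, even after normalization, equals $\lambda$, which is an integer for every point of the spectrum; there is no integrality obstruction to extract, and the Hopf invariant of $\ker\alpha$ is an integer for every nonvanishing field regardless of $\lambda$. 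Your antipodal idea is the right one, but the input you are missing is Proposition~\ref{P:comps}: the components of $V$ in the global Hopf frame $\{R,X_1,X_2\}$ are Laplace eigenfunctions with eigenvalue $\lambda(\lambda-2)$, hence restrictions of homogeneous harmonic polynomials of degree $\lambda-2$. For odd $\lambda$ this degree is odd, the component map $\Ss^3\to\RR^3$ is odd, and Borsuk--Ulam forces a common zero (Lemma~\ref{L:odd}); without this structural fact there is no degree argument to run. Likewise, in the construction the nonvanishing of $V_m$ is not achieved by ``combining two independent solutions'': regularity at the Hopf link pins down $F_m,G_m$ (up to scale) as the \emph{consecutive} Jacobi polynomials $P^{(1,1)}_{m-1},P^{(1,1)}_{m-2}$, and the absence of common zeros is the classical interlacing property of consecutive orthogonal polynomials, which must be invoked explicitly.

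The overtwistedness step is the most serious gap. An invariant torus on which the Hopf field is tangent to the contact planes does not let you ``read off'' an embedded disk tangent to the contact structure along its boundary; producing an explicit overtwisted disk is not mechanical. The paper instead applies Giroux's classification of $\Ss^1$-invariant contact structures: tightness is equivalent to the projection $\pi(\Gamma_R)$ of the characteristic surface being empty, and Lemma~\ref{L:zeros} shows (again by interlacing) that $F_m+(2z-1)G_m$ has a root in $(0,1)$, so $\pi(\Gamma_R)\neq\emptyset$ and the structure is overtwisted. For the Hopf invariant, your direct winding-number computation of the $\Tt^2$-invariant Gauss map is plausible in principle, but the paper deliberately avoids it (the Whitehead integral becomes intractable for large $m$); Proposition~\ref{P:Hopf} instead constructs an explicit homotopy through nonvanishing fields from $V_m$ to $R$ or $R'$ according to the parity of $m$, with nonvanishing guaranteed by Tur\'an's inequality for Jacobi polynomials, after which only the two model computations of Lemma~\ref{L:comp} are needed. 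In short, the proposal correctly locates the difficulties but supplies none of the three nontrivial ingredients: Borsuk--Ulam via Proposition~\ref{P:comps}, interlacing of Jacobi zeros, and Giroux's criterion together with Tur\'an's inequality.
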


The proof of this result is based on the construction of explicit $\Ss^1$-invariant nonvanishing curl eigenfields on $\mathbb S^3$ using an ansatz introduced in~\cite{kkpspaper} and suitable families of Jacobi polynomials. The characterization of the underlying contact structures is obtained using a criterion of Giroux~\cite{gir} for tightness and explicit constructions of contact homotopies with model fields. This theorem shows, in particular, that there exist overtwisted contact structures associated with nonvanishing Beltrami fields on $\mathbb S^3$, thus establishing that Theorem~\ref{T:pinch} does not hold for weakly compatible metrics:

\begin{co}
For each Hopf class $0$ or $-1$, there exists an overtwisted contact structure on $\mathbb S^3$ admitting a weakly adapted metric that is the round one.
\end{co}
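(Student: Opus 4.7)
The plan is to deduce the corollary as a direct specialization of Theorem \ref{maint}, by selecting two integer values of $m$ whose Hopf invariants realize the classes $0$ and $-1$ and for which the round metric is automatically weakly compatible. Concretely, I would evaluate the Hopf invariant formula $\tfrac{\mathrm{sign}(m)(-1)^{m+1}-1}{2}$ for the smallest allowed positive values: $m=2$ yields Hopf invariant $-1$, and $m=3$ yields Hopf invariant $0$.

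For each of these two choices, Theorem \ref{maint} furnishes a nonvanishing curl eigenfield $V_m$ on $\mathbb{S}^3$ with $\cu V_m = 2m\,V_m$ whose associated contact structure $\xi_m = \Ker \alpha_m$ is overtwisted, where $\alpha_m$ denotes the $1$-form dual to $V_m$ via the round metric. Since $V_m$ is nowhere zero, $\alpha_m$ is a genuine contact form, and by construction $\ast \dif \alpha_m = 2m\,\alpha_m$. Because the choices $m=2,3$ are \emph{positive}, the constant proportionality factor $f\equiv 2m$ is strictly positive, so Definition \ref{defcomp} is satisfied: the round metric is weakly compatible with $\xi_m$. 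This produces, in each of the Hopf classes $0$ and $-1$, an overtwisted contact structure whose weakly compatible metric is the round one, as claimed.

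The only delicate point is the sign of $f$: weak compatibility in Definition \ref{defcomp} explicitly requires $f>0$. The values $m=-2$ (Hopf $0$) and $m=-3$ (Hopf $-1$) would also realize both classes, but yield $f=2m<0$ and could be accommodated only after reversing the orientation of $\mathbb{S}^3$ (which flips the sign of $\ast$); restricting to positive $m$ bypasses this issue entirely. Beyond this bookkeeping there is no real obstacle, since all of the substantive content, namely the construction of the nonvanishing eigenfields, the identification of their contact structures as overtwisted, and the computation of the Hopf invariants, is already packaged into Theorem \ref{maint}.
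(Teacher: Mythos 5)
Your proposal is correct and follows the same route as the paper: the corollary is a direct specialization of Theorem~\ref{maint}, taking $m=2$ for Hopf class $-1$ and $m=3$ for Hopf class $0$, with weak compatibility of the round metric immediate from $\ast\dif\alpha_m = 2m\,\alpha_m$ and $2m>0$. Your remark on avoiding negative $m$ to keep the proportionality factor positive is a sensible clarification consistent with the paper's Remark~\ref{R:negative}.
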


The topological rigidity of tight structures on $\mathbb S^3$ (any tight contact structure is contact isotopic to the standard one) has a nice \emph{geometric rigidity} counterpart, i.e., any constant norm Beltrami field on $\Ss^3$ is isometric to the Hopf (or anti-Hopf) field~\cite[Theorem A]{glu}. We revisit this result by providing a completely new proof of the following statement:

\begin{pr}\label{maint2}
Let $V$ be a curl eigenfield on $\mathbb S^3$ with $\abs{V}=1$. Then $\lambda=\pm 2$ and $V$ is isometric to the Hopf (or anti-Hopf) field.
\end{pr}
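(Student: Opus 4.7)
My plan is to combine a Bochner--Weitzenböck computation with the interpretation of $V$ as a harmonic unit vector field, and then to invoke rigidity from the theory of harmonic maps into spheres. Writing $\alpha = V^\flat$, the curl eigenfield condition $\cu V = \lambda V$ with constant $\lambda$ gives $\di V = 0$ (so $\delta\alpha = 0$), and applying $\delta$ to $d\alpha = \lambda \ast \alpha$ yields $\Delta_H \alpha = \lambda^2 \alpha$. Since on round $\mathbb S^3$ one has $\Ric = 2g$, the Weitzenböck formula gives $\nabla^\ast \nabla V = (\lambda^2 - 2)V$. The pointwise identity $\langle \nabla^\ast \nabla V, V\rangle = |\nabla V|^2$, a direct consequence of $|V|\equiv 1$, then forces $|\nabla V|^2 \equiv \lambda^2 - 2$ to be a non-negative constant (consistent with the spectrum $\lambda = \pm(k+2)$, $k \geq 0$, of $\cu$).

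The key reformulation is that the equality $\nabla^\ast \nabla V = |\nabla V|^2 V$ is precisely the Euler--Lagrange equation characterising $V$ as a harmonic section of the unit tangent bundle $U\mathbb S^3$ with its Sasaki metric---equivalently, a harmonic unit vector field in the sense of Wiegmink, Wood and Gil-Medrano. Via the global left-invariant framing of $T\mathbb S^3 \cong \mathbb S^3 \times \RR^3$ induced by quaternion multiplication, $V$ corresponds to a smooth map $\phi_V \colon \mathbb S^3 \to \mathbb S^2$, and the harmonic-unit-vector-field equation translates, up to explicit connection terms, into a harmonic-map equation for $\phi_V$. Splitting $\nabla V = S + W$ into symmetric and antisymmetric parts, a direct computation gives $|W|^2 = \tfrac{1}{2}|\cu V|^2 = \lambda^2/2$, so the pointwise constancy of $|\nabla V|^2$ forces $|S|^2 \equiv \lambda^2/2 - 2$. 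In particular, $V$ is Killing iff $S \equiv 0$ iff $\lambda^2 = 4$, and the proof is reduced to showing that the symmetric part $S$ must vanish identically.

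The main obstacle is this last rigidity step: ruling out a strictly positive constant value for $|S|^2$. The line I would pursue is to exploit that $\phi_V$ is a harmonic map with constant energy density and that the Beltrami relation makes the pullback $\phi_V^\ast \mathrm{vol}_{\mathbb S^2}$ proportional to the contact volume form $\alpha \wedge d\alpha$; these two constraints together should force $\phi_V$ to be a Riemannian submersion onto a round $2$-sphere (necessarily of radius $\tfrac{1}{2}$), and a well-known rigidity result identifies any such submersion from round $\mathbb S^3$ with a Hopf fibration up to target isometry. Once $\lambda^2 = 4$ is established, the eigenspace of $\cu$ with eigenvalue $\pm 2$ is three-dimensional and coincides with one of the two $\mathfrak{su}(2)$-summands of the Killing algebra $\mathfrak{so}(4)$; the transitive action of the corresponding $SO(3) \subset \mathrm{Isom}(\mathbb S^3)$ on the unit sphere of this eigenspace then exhibits $V$ as isometric to the standard (anti-)Hopf field, concluding the proof.
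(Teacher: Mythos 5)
Your Bochner setup is sound and, in spirit, close to the paper's actual argument: the identity $\nabla^*\nabla V=(\lambda^2-2)V$, the resulting constancy of $\abs{\nabla V}^2$, the splitting $\nabla V=S+W$ with $\abs{W}^2=\lambda^2/2$ and $\abs{S}^2=\lambda^2/2-2$, and the final identification of the $\lambda=\pm2$ eigenfields with the (anti-)Hopf field via the transitive isometric action on the three-dimensional eigenspace all check out. The problem is that the step carrying all the weight --- ruling out $\lambda^2>4$, i.e.\ showing $S\equiv 0$ --- is precisely the step you leave as a heuristic (``should force $\phi_V$ to be a Riemannian submersion''), and the sketch you offer for it does not work as stated. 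First, $\phi_V^*\mathrm{vol}_{\mathbb S^2}$ is a $2$-form on $\mathbb S^3$, so it cannot be proportional to the $3$-form $\alpha\wedge\dif\alpha$; no identity of the kind you invoke is available. Second, constant energy density for a harmonic map does not by itself imply constant rank, let alone that the map is a Riemannian submersion: what is needed is a pointwise positive lower bound on $\abs{\Lambda^2\dif\phi_V}$, and your argument supplies no mechanism for obtaining one.

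The paper closes exactly this gap as follows. The components $f,f_1,f_2$ of $V$ in the global Hopf frame are eigenfunctions of the Laplacian with eigenvalue $\lambda(\lambda-2)$, so the map $\varphi_V=(f,f_1,f_2):\mathbb S^3\to\mathbb S^2$ is an eigenmap, hence harmonic with constant energy density $\abs{\dif\varphi_V}^2=\lambda(\lambda-2)$; moreover $V\in\Ker\dif\varphi_V$ and the fibres are geodesic because $\nabla_VV=0$. The Eells--Sampson Weitzenb\"ock formula for harmonic maps $\mathbb S^3\to\mathbb S^2$,
\begin{equation*}
\tfrac{1}{2}\Delta\abs{\dif\varphi}^2=\abs{\nabla\dif\varphi}^2+2\abs{\dif\varphi}^2-2\abs{\Lambda^2\dif\varphi}^2\,,
\end{equation*}
combined with the constancy of the energy density, yields the pointwise bound $\abs{\Lambda^2\dif\varphi_V}^2\geq\lambda(\lambda-2)>0$ when $\lambda>2$, i.e.\ $\dif\varphi_V$ has constant rank $2$. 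Baird's B\"ochner-technique criterion then upgrades $\varphi_V$ to a harmonic morphism, and the Eells--Yiu classification of polynomial harmonic morphisms between spheres identifies it isometrically with the Hopf fibration; since isometries preserve curl eigenspaces, this contradicts $\lambda>2$. If you wish to retain your harmonic-unit-vector-field formulation, you would still need an argument of comparable strength at this point --- a pointwise rank bound followed by a genuine classification result for submersive harmonic maps or morphisms from the round $\mathbb S^3$ --- before the proof can be considered complete.
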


While in the proof of~\cite{glu} one associates a distance-decreasing holomorphic map from $\Ss^2$ to $\Ss^2$ to the Beltrami field $V$, and then applies a Liouville-type theorem to show that there are no nontrivial maps of this type, in our proof we associate to $V$ a map from $\Ss^3$ to $\Ss^2$ that turns out to be a submersive polynomial harmonic morphism and therefore isometrically related to the Hopf map by a standard classification result~\cite{eells}.

Concerning the torus $\mathbb T^3$, we show that there exists a non-vanishing curl eigenfield with associated tight contact structure for each eigenvalue, and that for infinitely many eigenvalues there are curl eigenfields that are associated with overtwisted contact structures. The proof of the first part is based on a simple explicit construction, while the second part is more involved and is based on the construction of $\mathbb S^1$-invariant curl eigenfields using eigenfunctions of the Laplacian on $\mathbb T^2$ with contractible nodal sets. In the statement, the terminology \emph{homotopically trivial field} means that the field is homotopic (via nonvanishing fields) to the coordinate field $\partial_{x_1}$.

\begin{te}\label{maint3}
For each eigenvalue $\lambda$ of the curl operator on $\mathbb T^3$, there exists a nonvanishing curl eigenfield $V_\lambda$ which is homotopically trivial and whose associated contact structure is tight. Moreover, all tight contactomorphic classes are realized this way. Furthermore, there exist infinitely many eigenvalues $\{\lambda_\ell\}_{\ell \in \NN^*}$ and corresponding eigenfields $V_\ell$ such that, for each $\ell$, the contact structure associated with $V_\ell$ is overtwisted.
\end{te}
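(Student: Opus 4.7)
The plan is to establish the three assertions separately: (i) a nonvanishing, homotopically trivial, tight eigenfield for every eigenvalue of $\cu$; (ii) realization of every tight contactomorphism class; (iii) overtwisted eigenfields for infinitely many eigenvalues. For (i) and (ii), I would use the plane-wave ansatz
\[
V_\lambda(x) = e_1\cos(k\cdot x)+e_2\sin(k\cdot x),
\]
where $k\in\ZZ^3\setminus\{0\}$ realizes $|\lambda|=|k|$ (the $\cu$-spectrum on $\mathbb T^3$ is exactly $\{\pm|k|:k\in\ZZ^3\setminus\{0\}\}$) and $\{e_1,e_2\}$ is an orthonormal basis of $k^\perp$, oriented so that $\cu V_\lambda=\lambda V_\lambda$. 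Such a field has $|V_\lambda|\equiv 1$ and is therefore nonvanishing, and it is connected to the parallel field $e_1$ by the unit-norm homotopy $V_t=e_1\cos(tk\cdot x)+e_2\sin(tk\cdot x)$, so it is homotopically trivial. After a linear change of coordinates on $\mathbb T^3$ sending $k$ to $(0,0,n)$ for some positive integer $n$, the dual one-form reads $\cos(nz)\,dx-\sin(nz)\,dy$ up to sign, one of Giroux's standard models $\xi_n$; tightness follows from Giroux's convex-torus criterion since the dividing set on every horizontal torus is a union of essential curves. Kanda's classification of tight contact structures on $\mathbb T^3$ then shows that the $\xi_n$ with $n\in\NN^*$ exhaust all tight contactomorphism classes, proving (ii).

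For (iii), the plan is to use an $\Ss^1$-invariant ansatz $V=u(x,y)\,\partial_z+W(x,y)$ with $W$ tangent to $\mathbb T^2$. The eigenvalue equation $\cu V=\lambda V$ then reduces to
\[
W=\tfrac{1}{\lambda}\nabla^\perp u, \qquad -\Delta u=\lambda^2 u,
\]
where $\nabla^\perp u:=(\partial_y u,-\partial_x u)$ and $\Delta$ is the Laplacian on $\mathbb T^2$. A short computation yields
\[
\alpha\wedge d\alpha=\tfrac{1}{\lambda}\bigl(|\nabla u|^2+\lambda^2 u^2\bigr)\,dx\,dy\,dz,
\]
so $V$ is nonvanishing (equivalently $\alpha$ is a contact form) precisely when $0$ is a regular value of $u$. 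The key geometric observation is that every horizontal torus $\mathbb T^2\times\{z_0\}$ is convex for the contact vector field $\partial_z$ (since $\mathcal L_{\partial_z}\alpha=0$), with dividing set equal to the nodal set $\{u=0\}$; by Giroux's criterion for convex tori, the ambient contact structure is overtwisted as soon as this dividing set contains a contractible component.

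The heart of the argument, and the step I expect to be the main obstacle, is the construction of an eigenfunction $u_0$ of $-\Delta$ on $\mathbb T^2$ whose nodal set is smooth and contains a contractible loop: single plane waves and two-term sums only produce nodal sets that are unions of parallel straight lines (all homologically essential on $\mathbb T^2$), so one must work in an eigenspace of high multiplicity --- e.g.\ $\Lambda_0=a^2+b^2$ with several essentially distinct representations as a sum of two squares --- and combine three or more plane waves to produce a closed nodal loop bounding a disk, regularity of $0$ being generic in such an eigenspace and enforceable by a small perturbation. Once $u_0$ is available, the required infinite family comes from the rescalings $u_m(x,y):=u_0(mx,my)$, which for every $m\in\NN^*$ define Laplace eigenfunctions on $\mathbb T^2$ with eigenvalue $m^2\Lambda_0$, still have $0$ as a regular value, and whose nodal sets contain $m^2$ rescaled copies of each contractible component of $\{u_0=0\}$; the corresponding $\Ss^1$-invariant Beltrami fields $V_\ell$ then have $\cu$-eigenvalues $\lambda_\ell=\ell\sqrt{\Lambda_0}$ with overtwisted associated contact structures.
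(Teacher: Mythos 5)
Your treatment of the first two assertions matches the paper's: the same constant-norm plane-wave eigenfields, homotopical triviality because the Gauss map takes values in a great circle of $\Ss^2$, reduction to the standard models $\eta_m=\sin(mx_3)\,\dif x_1+\cos(mx_3)\,\dif x_2$, and Kanda's classification for the exhaustion of the tight classes. (The paper proves tightness by lifting to $\RR^3$ and exhibiting an explicit contactomorphism with the standard tight structure there, i.e.\ universal tightness, rather than by a convex-torus argument, but both routes are standard.) For the third assertion you also use the same $\Ss^1$-invariant ansatz and the same Giroux-type criterion reducing overtwistedness to the existence of a Laplace eigenfunction on $\Tt^2$ with regular nodal set containing a contractible component.

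The gap is exactly where you flag it: you never actually produce the eigenfunction $u_0$. Asserting that in an eigenspace of high multiplicity one can ``combine three or more plane waves to produce a closed nodal loop bounding a disk'' is not a proof; for low multiplicities all nodal components are in fact homologically essential (e.g.\ for $\Lambda=1$ every nonzero eigenfunction $A\cos(x-\phi)+B\cos(y-\psi)$ has nodal set a union of one or two essential circles or of straight lines), and nothing in your sketch rules out that this persists for the combinations you would choose. This is the genuinely hard analytic step: the paper resolves it in Proposition~\ref{nodalcirc} by invoking the inverse localization theorem of Enciso, Peralta-Salas and Torres de Lizaur, which transplants the concentric nodal circles of the Bessel function $J_0$ into eigenfunctions of arbitrarily large eigenvalue on $\Tt^2$, combined with the genericity of regular nodal sets (Proposition~\ref{density}) to remove possible critical points elsewhere. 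On the other hand, your rescaling observation $u_m(x,y)=u_0(mx,my)$ is correct and is a genuine simplification of the ``infinitely many eigenvalues'' step: granted a single eigenfunction with a regular contractible nodal component, it immediately yields the whole sequence $\lambda_\ell=\ell\sqrt{\Lambda_0}$, whereas the paper extracts the infinite sequence directly from the inverse localization theorem. So the architecture is sound, but the proof is incomplete without a construction (or citation) establishing the existence of $u_0$.
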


The organization of this paper is as follows. Section~\ref{S:prelim} is divided in three subsections; Subsection~\ref{SS:contact} recalls some basic notions of contact geometry and in Subsection~\ref{SS:Belta} we prove Proposition~\ref{gencont}, which provides a criterion to show when two contact structures associated with curl eigenfields are contactomorphic; in Subsection~\ref{nodal_sect} we prove two results that are instrumental to prove Theorem~\ref{maint3}, i.e. Propositions~\ref{density} and~\ref{nodalcirc}, which establish some useful properties of the nodal sets of the eigenfunctions of the Laplacian on $\mathbb T^2$ and $\mathbb S^2$. Theorem~\ref{maint} and Proposition~\ref{maint2} on $\mathbb S^3$ are proved in Sections~\ref{S:sphere} and~\ref{S:const}, respectively, while the proof of Theorem~\ref{maint3} is presented in Section~\ref{S:torus}. Some examples and final considerations are provided in Section~\ref{S:final}, including an application to the construction of planar open book decompositions.

\section{Notation and preliminary results}\label{S:prelim}

The goal of this section is to introduce first (in Subsection~\ref{SS:contact}) some notions of contact geometry that will be used without further mention in the next sections. In Subsection~\ref{SS:Belta} we recall the classical contact-Beltrami correspondence and present a criterion to analyze when two contact structures associated with different curl eigenfields are contactomorphic (cf. Proposition~\ref{gencont}). Finally, in Subsection~\ref{nodal_sect} we prove two results on the eigenfunctions of the Laplacian in $\mathbb S^2$ and $\mathbb T^2$, of interest in themselves, that will be key in Section~\ref{S:torus} (cf. Propositions~\ref{density} and~\ref{nodalcirc}).

\subsection{Basics of contact geometry}\label{SS:contact}
Let $M$ be a closed (oriented, smooth) $3$-manifold. A contact form on $M$ is a 1-form $\alpha$ such that $\alpha \wedge \dif \alpha \neq 0$. It is customary to assume that $\alpha \wedge \dif \alpha > 0$, i.e., $\alpha$ is a positive contact form and $\alpha\wedge \dif \alpha$ defines a volume form on $M$ (otherwise, the contact form is said to be negative). A $2$-plane field $\zeta \subset TM$ is a (coorientable) contact structure on $M$ if there is a contact form $\alpha$ such that $\zeta=\ker \alpha$. Such a $2$-plane field is a maximally non-integrable distribution on $M$, also called a contact distribution. The Reeb vector field $R$ associated to a contact form is (uniquely) determined by the conditions $\alpha(R) =1$, $i_R\dif\alpha=0$.

A contact structure is called \textit{overtwisted} if there exists an embedded disk which is tangent to the contact distribution along its boundary; if such an embedded disk does not exist, the contact structure is called \textit{tight}.

The following equivalence relations between two contact structures $\zeta_0$ and $\zeta_1$ (with defining contact forms $\alpha_0$ and $\alpha_1$, respectively) are usually introduced in the literature:

\begin{itemize}
\item \emph{Contactomorphic}: if there is a diffeomorphism $\varphi$ such that $\dif \varphi (\zeta_0)=\zeta_1$. In terms of the associated 1-forms: $\varphi^* \alpha_1 = f \alpha_0$, for some nowhere zero function $f$.

\item \emph{Contact homotopic}: if there is a smooth family of contact structures $\zeta_t$, $t\in[0,1]$ connecting $\zeta_0$ with $\zeta_1$. In terms of the associated 1-forms: there exists a $1$-parameter family $\alpha_t$ such that $\alpha_t \wedge \dif \alpha_t \neq 0$, for all $t\in[0,1]$.

\item \emph{Contact isotopic}: if there is a smooth family of diffeomorphisms $\varphi_t$, $t\in[0,1]$ with $\varphi_0=id$ and $\dif\varphi_1(\zeta_0)=\zeta_1$. In terms of the associated 1-forms:  $\varphi^*_1 \alpha_1 = f \alpha_0$, for some positive function $f$.

\item \emph{Homotopic through plane fields}: if there is a smooth family of plane fields $\zeta_t$, $t\in[0,1]$ connecting $\zeta_0$ with $\zeta_1$. In terms of the associated 1-forms:  there exists a $1$-parameter family of nondegenerate $1$-forms $\alpha_t$ (i.e. $\alpha_t \neq 0$ for all $t\in[0,1]$). The homotopy through plane fields is equivalent (after fixing a global parallelism of $M$) to the homotopy of the associated maps $\varphi_{R} : M \to \Ss^2$ defined as $\varphi_R (p):=\frac{1}{\abs{R_p}}R_p$ for any $p\in M$. Recall that $R$ is the Reeb vector field associated with the contact form. The map $\varphi_R$ is called the \emph{Gauss map} of the contact distribution. We recall that the Euler class $e(\zeta)\in H^2(M;\ZZ)$ of a contact distribution $\zeta$ can be computed using the Gauss map $\varphi_R$ as $e(\zeta)=\varphi_R^*(e(T\Ss^2))$.
\end{itemize}

Besides the obvious implications ``contact isotopic $\Rightarrow$ contactomorphic'' and ``contact homotopic $\Rightarrow$ homotopic through plane fields'', let us recall two classical results that connect in a non obvious way the equivalence relations defined above:

\begin{te}[Gray stability]
Contact homotopic $\Rightarrow$ contact isotopic.
\end{te}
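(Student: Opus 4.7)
The plan is to apply Moser's trick to a smooth family of contact forms $\alpha_t$ realizing the given contact homotopy $\zeta_t = \ker \alpha_t$. I would look for a time-dependent vector field $X_t$ on $M$ whose non-autonomous flow $\varphi_t$ (with $\varphi_0 = \mathrm{id}$) satisfies
\begin{equation*}
\varphi_t^* \alpha_t = \lambda_t \alpha_0
\end{equation*}
for some positive function $\lambda_t$ that is not prescribed in advance but is determined along the way. Any such identity immediately forces $d\varphi_t(\zeta_0) = \zeta_t$, which is precisely the contact isotopy required.

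Differentiating the ansatz in $t$ and applying Cartan's magic formula gives
\begin{equation*}
\varphi_t^* \bigl( d(i_{X_t}\alpha_t) + i_{X_t} d\alpha_t + \dot{\alpha}_t \bigr) = \dot{\lambda}_t \alpha_0,
\end{equation*}
which, after pulling back by $\varphi_t^{-1}$ and using the ansatz to write $\dot{\lambda}_t \alpha_0 = (\dot{\lambda}_t/\lambda_t)\,\varphi_t^* \alpha_t$, becomes, with $\mu_t := (\dot{\lambda}_t/\lambda_t) \circ \varphi_t^{-1}$,
\begin{equation*}
d(i_{X_t}\alpha_t) + i_{X_t} d\alpha_t + \dot{\alpha}_t = \mu_t \alpha_t.
\end{equation*}
The key simplification, and the heart of Moser's method, is to restrict the search to vector fields tangent to the contact distribution, i.e.~$i_{X_t}\alpha_t \equiv 0$, so that the PDE collapses to the purely algebraic relation $i_{X_t} d\alpha_t = \mu_t \alpha_t - \dot{\alpha}_t$.

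To pin down $\mu_t$, I would evaluate both sides on the Reeb field $R_t$ of $\alpha_t$: using $\alpha_t(R_t)=1$ and $i_{R_t} d\alpha_t = 0$, this forces $\mu_t = \dot{\alpha}_t(R_t)$, and then $\mu_t \alpha_t - \dot{\alpha}_t$ annihilates $R_t$. Since the kernel of $X \mapsto i_X d\alpha_t$ is spanned by $R_t$, its image is exactly the annihilator of $R_t$, so there is a \emph{unique} $X_t \in \zeta_t$ solving the equation. The step I expect to be the most delicate is integrating the smoothly-varying $X_t$ to a genuine isotopy defined on the whole interval $[0,1]$: this is where the hypothesis that $M$ is closed is essential, as compactness prevents the non-autonomous flow from escaping in finite time. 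Finally, $\lambda_t$ is recovered from the scalar ODE $\dot{\lambda}_t = (\mu_t \circ \varphi_t)\,\lambda_t$ with $\lambda_0 \equiv 1$, and remains positive by construction, yielding the desired contact isotopy $\varphi_t$.
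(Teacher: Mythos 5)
Your argument is the standard Moser-trick proof of Gray's stability theorem and is correct: restricting to $X_t\in\ker\alpha_t$ reduces the problem to the algebraic equation $i_{X_t}d\alpha_t=\mu_t\alpha_t-\dot\alpha_t$ with $\mu_t=\dot\alpha_t(R_t)$, which has a unique solution in $\zeta_t$ by nondegeneracy of $d\alpha_t$ on the contact planes, and compactness of $M$ lets you integrate. The paper states Gray stability as a classical background fact without proof (it is proved exactly this way in, e.g., Geiges's survey cited as \cite{geighand}), so there is nothing to compare against; your write-up supplies the standard argument correctly.
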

\begin{te}[Eliashberg classification~\cite{Eli}]
For overtwisted contact structures, homotopic through plane fields $\Leftrightarrow$ contact homotopic. In particular, the contact isotopy classes of overtwisted contact structures are indexed by the Hopf invariant of (the map $\varphi_R : M \to \Ss^2$ associated to) the contact structure, which is an integer if $M=\Ss^3$.
\end{te}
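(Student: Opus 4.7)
The forward implication \emph{contact homotopic $\Rightarrow$ homotopic through plane fields} is immediate from the definitions: any smooth family $\alpha_t$ of contact forms is in particular a smooth family of nondegenerate $1$-forms, so the kernels $\ker\alpha_t$ form a homotopy of $2$-plane fields. All the substance of the theorem lies in the converse, which is Eliashberg's h-principle for overtwisted contact structures. I will therefore only outline a strategy.

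Let $\zeta_0, \zeta_1$ be overtwisted contact structures connected by a plane-field homotopy $\eta_t$. The first step is to \emph{localize near the overtwisted disks}. Since each $\zeta_i$ contains an embedded overtwisted disk $\Delta_i$, after a contact isotopy I would arrange that $\Delta_0 = \Delta_1 = \Delta$ and that $\zeta_0$ and $\zeta_1$ agree with a standard overtwisted model on a tubular neighborhood $U$ of $\Delta$, using uniqueness of the standard neighborhood of an overtwisted disk. The second step, which is the technical heart, is to deform $\eta_t$ relative to $U$ through plane fields into a genuine family of contact structures. The key idea is that the overtwisted disk provides a reservoir of flexibility: Lutz-type twists performed along transverse curves clustering near $\Delta$ can realize any prescribed change in the homotopy class of the plane field while remaining in the contact category, and so can absorb the formal (obstruction-theoretic) discrepancy between $\eta_t$ and a contact deformation. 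Converting this purely topological homotopy on $M\setminus U$ into a pointwise non-integrable one is the main obstacle; it is precisely Eliashberg's flexibility argument.

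Once contact homotopy is established, Gray stability promotes it to contact isotopy, so overtwisted isotopy classes are in bijection with plane-field homotopy classes. For the explicit enumeration on $\Ss^3$, I would trivialize $T\Ss^3$ via a global parallelism and identify homotopy classes of plane fields with homotopy classes of Gauss maps $\varphi_R : \Ss^3 \to \Ss^2$. Since $\pi_3(\Ss^2) \cong \ZZ$, generated by the Hopf fibration, these classes are enumerated by the Hopf invariant of $\varphi_R$, yielding the claimed $\ZZ$-indexing of overtwisted contact isotopy classes on $\Ss^3$.
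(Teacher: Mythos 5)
This statement is quoted in the paper as a classical result of Eliashberg (the reference \cite{Eli}); the paper offers no proof of it, so there is no internal argument to compare yours against. Your treatment of the easy direction (a contact homotopy is in particular a plane-field homotopy) and of the enumeration on $\Ss^3$ (trivialize $T\Ss^3$, identify plane fields with Gauss maps, use $\pi_3(\Ss^2)\cong\ZZ$ via the Hopf invariant) is correct and matches how the paper uses the theorem later.

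For the hard direction you give only a strategy and explicitly defer the technical heart to ``Eliashberg's flexibility argument,'' so what you have written is a faithful high-level description rather than a proof; in the context of this paper that is acceptable, since the result is treated as a black box. One inaccuracy worth flagging: Lutz twists are the mechanism by which one shows every homotopy class of plane fields \emph{contains} an overtwisted contact structure (surjectivity of the classification map), but the implication you actually need here --- that a homotopy through plane fields between two overtwisted structures can be deformed into a homotopy through contact structures --- is the parametric/injectivity half of Eliashberg's theorem. That part is not proved by twisting along transverse curves; it is an extension argument over a triangulation of $M$, where the contact condition is achieved cell by cell and the overtwisted disk is used to absorb the obstruction on the $3$-cells. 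As stated, your second step conflates these two arguments, so if you intended this as more than a citation you would need to replace the Lutz-twist heuristic with the genuine parametric extension scheme.
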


\subsection{Beltrami fields and contact structures}\label{SS:Belta}

The curl of a vector field $V$ on a Riemannian $3$-manifold $(M,g)$ is defined as the unique vector field $\cu V$ that satisfies
$$
i_{\cu V}\mathrm{vol}_g=\dif\alpha\,,
$$
where $\mathrm{vol}_g$ is the Riemannian volume form, and $\alpha$ is the dual $1$-form of $V$ (computed with the metric). The curl defines an essentially self-adjoint operator on the space of $L^2$ exact divergence-free vector fields on $M$ (we recall that a field is exact if it is the curl of another field), and its spectrum is discrete. In particular, the eigenfields of curl,
$$
\cu V=\lambda V
$$
for some real $\lambda\neq 0$, form a basis of the aforementioned space. In terms of the dual $1$-form, the eigenfield equation reads as
$$
\ast \dif  \alpha = \lambda \alpha\,.
$$

Curl eigenfields are Beltrami fields with constant proportionality factor (i.e., $f=\lambda$ in Equation~\eqref{defbelt}). Most of the literature on Beltrami fields concerns curl eigenfields because they are more flexible and their global existence is much easier to establish (see~\cite{ARMA} for an account on Beltrami fields with nonconstant proportionality factor).

As noticed in Section~\ref{S:intro}, for the dual $1$-form of a Beltrami field we have $\alpha \wedge \dif \alpha=f\abs{\alpha}^2 \mathrm{vol}_g$ and therefore, if the field and the factor $f$ are nonvanishing, $\alpha$ is a contact form on $M$ and
$$R:=\frac{V}{\abs{V}^2}$$
is the associated Reeb field. Conversely, given a contact form $\alpha$ on $M$ with the associated Reeb vector field $R$, there exists~\cite{blair} an endomorphism $\phi$ of $TM$ and a Riemannian metric $g$ such that, for any vector fields $X,Y$ on $M$,
\begin{equation}\label{assocmet}
\phi^2= - I + \alpha \otimes R\,, \quad \alpha(Y)=g(R, Y)\,, \quad \tfrac{1}{2}\dif \alpha (X, Y) = g(X, \phi Y)\,.
\end{equation}
For such a metric $g$, one can prove that $\tfrac{1}{2}\alpha \wedge \dif \alpha= \mathrm{vol}_g$ and therefore $\ast \dif \alpha = 2 \alpha$. Accordingly, the Reeb field of the given contact form is a (nonvanishing) Beltrami field (in fact, a curl eigenfield). This correspondence between Reeb and Beltrami fields was developed by Etnyre and Ghrist~\cite{etn}, and can be summarized as follows:
\begin{pr}
Let $M$ be a Riemannian 3-manifold. Any nonvanishing
Beltrami field on $M$ with positive proportionality factor is, up to rescaling, the Reeb field for some contact form on $M$. Conversely, given a
contact form $\alpha$ on $M$ with associated Reeb field $R$, any nonzero rescaling of $R$ is a (nonvanishing) curl eigenfield for some Riemannian metric on $M$ (which is weakly compatible with the contact structure).
\end{pr}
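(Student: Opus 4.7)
The plan is to handle the two halves of the correspondence separately, as each relies on different ideas.

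For the forward direction I would start from a nonvanishing $V$ with $\cu V = fV$, $f>0$, and pass to the dual one-form $\alpha := V^\flat$. The Beltrami equation $\ast\dif\alpha=f\alpha$ is equivalent, via $\ast^{2}=\mathrm{id}$ on one-forms in dimension three, to $\dif\alpha=f\ast\alpha=f\,i_V\mathrm{vol}_g$, the last equality being the standard identity $\ast V^\flat=i_V\mathrm{vol}_g$. Wedging against $\alpha$ then gives
\[
\alpha\wedge\dif\alpha \;=\; f\,\alpha(V)\,\mathrm{vol}_g \;=\; f\,\abs{V}^{2}\,\mathrm{vol}_g \;>\;0,
\]
so $\alpha$ is a positive contact form. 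To identify the Reeb field I would set $R:=V/\abs{V}^{2}$ and verify the two defining properties directly: $\alpha(R)=g(V,V)/\abs{V}^{2}=1$, and $i_R\dif\alpha=(f/\abs{V}^{2})\,i_V(i_V\mathrm{vol}_g)=0$ since $i_V\circ i_V=0$ on any form. Hence $R$ is the Reeb field of $\alpha$ and $V=\abs{V}^{2}R$ is the claimed rescaling.

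For the converse I would invoke the construction of an associated metric recalled just before~\eqref{assocmet}: given a contact form $\alpha$ with Reeb field $R$, there exist an endomorphism $\phi$ of $TM$ and a Riemannian metric $g$ satisfying the three relations in~\eqref{assocmet}. These immediately yield $\abs{R}_g=1$ and $R\perp_g\ker\alpha$, and a short computation in an orthonormal $\phi$-adapted frame $(R,e_1,\phi e_1)$ on $\ker\alpha$ -- using $\phi^{2}=-I+\alpha\otimes R$ together with the compatibility $\tfrac{1}{2}\dif\alpha(X,Y)=g(X,\phi Y)$ -- shows $\tfrac{1}{2}\alpha\wedge\dif\alpha=\mathrm{vol}_g$. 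Combined with the Reeb condition $i_R\dif\alpha=0$ and $\abs{\alpha}_g=1$, this forces $\ast_g\dif\alpha=2\alpha$, so $R$ is a unit curl eigenfield in $g$ with eigenvalue $2$, and $g$ is strongly compatible (hence weakly compatible) with $\ker\alpha$. The rescaling assertion is then immediate: for any nonzero constant $c$, $\cu(cR)=c\,\cu R=2(cR)$ by linearity of curl, so $V=cR$ remains a curl eigenfield of the same eigenvalue in the same metric.

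The only step that is not a one-line identity is the derivation $\tfrac{1}{2}\alpha\wedge\dif\alpha=\mathrm{vol}_g$ from the algebraic data in~\eqref{assocmet}; this is the classical content of the contact-metric construction in~\cite{blair}, reducing to the normalization $\phi^{2}=-I$ on $\ker\alpha$ together with the compatibility relation between $g$, $\phi$ and $\dif\alpha$ that makes $\phi|_{\ker\alpha}$ a $g$-compatible almost complex structure with $\dif\alpha|_{\ker\alpha}$ as its (twice) Kähler form. All remaining steps are algebraic manipulations or direct consequences of the definitions and Hodge duality in three dimensions.
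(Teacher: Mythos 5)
Your argument for both halves coincides with the paper's own (which is simply the discussion preceding the proposition): the forward direction via $\alpha\wedge\dif\alpha=f\abs{\alpha}^{2}\,\mathrm{vol}_g$ together with the verification that $R=V/\abs{V}^{2}$ satisfies $\alpha(R)=1$ and $i_R\dif\alpha=0$, and the converse via Blair's associated metric satisfying~\eqref{assocmet}, from which $\tfrac12\alpha\wedge\dif\alpha=\mathrm{vol}_g$ and hence $\ast\dif\alpha=2\alpha$. All of those steps are correct as you present them.

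The one genuine gap is in your final sentence. The clause ``any nonzero rescaling of $R$'' is meant in the same sense as the rescaling in the forward direction, i.e.\ multiplication by an arbitrary nowhere-vanishing \emph{function} $h$, not by a constant $c$; this is the actual content of the Etnyre--Ghrist correspondence~\cite{etn} that the proposition summarizes, and it is also why the statement says ``for \emph{some} Riemannian metric'' --- the metric must be adapted to the rescaling. For nonconstant $h$ the field $hR$ is not a curl eigenfield of the Blair metric, and your observation $\cu(cR)=2(cR)$ does not apply. One instead builds a new metric $g_h$: declare $hR\perp_{g_h}\ker\alpha$ with $(hR)^{\flat_{g_h}}=\alpha$ (so $\abs{hR}^2_{g_h}=h$, which fixes the sign convention for $\alpha$ according to the sign of $h$), and choose the metric on $\ker\alpha$ so that $\mathrm{vol}_{g_h}=h^{-1}\alpha\wedge\dif\alpha$. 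Since $i_{hR}\bigl(h^{-1}\alpha\wedge\dif\alpha\bigr)=i_R(\alpha\wedge\dif\alpha)=\dif\alpha$, one gets $i_{hR}\mathrm{vol}_{g_h}=\dif\bigl((hR)^{\flat_{g_h}}\bigr)$, i.e.\ $\cu_{g_h}(hR)=hR$, a nonvanishing curl eigenfield with eigenvalue $1$, and $\ast_{g_h}\dif\alpha=\alpha$ shows $g_h$ is weakly compatible. Without some such construction the ``any nonzero rescaling'' claim is not established.
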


The main result of this subsection provides a sufficient condition under which two contact structures associated with curl eigenfields (with the same eigenvalue) are contact homotopic (so contact isotopic and with homotopic Gauss maps). This result is instrumental to find out whether a given Beltrami field is tight or overtwisted, by comparing it with the models that we will introduce in Sections~\ref{S:sphere} and~\ref{S:torus}. Specifically, Proposition~\ref{gencont} is used in Examples~\ref{nonKKPS2},~\ref{torus_ex} and~\ref{ex_final} presented in Section~\ref{S:final} to characterize the contact structures associated with different Beltrami fields. In its statement we denote by $C_+$ and $C_{-}$ the sets of points where the curl eigenfields $V$ and $W$ are positively, resp. negatively, aligned.

\begin{re}
We observe that the sets $C_+$ and $C_-$ were already used in~\cite{duf} to measure the ``distance'' in homotopy classes between two vector fields $V$ and $W$. In particular,~\cite[Lemma 19]{duf} shows that if either $C_+$ or $C_-$ is empty, then $V$ and $W$ are homotopic (through nonvanishing fields). However, we do not use this result in our proof, which consists in finding an explicit contact homotopy between the associated contact forms (notice that the homotopy in~\cite{duf} is not necessarily of contact type).
\end{re}

\begin{pr}\label{gencont}
On a Riemannian $3$-manifold $(M,g)$, let $V$ and $W$ be nonvanishing curl eigenfields with the same eigenvalue $\lambda\neq 0$. Then: $(i)$ if either $C_+$ or $C_{-}$ is empty, the induced contact structures are contact homotopic; $(ii)$ there exists a positive constant $c_0$ that only depends on $V$ and $W$, such that the contact structure associated with the nonvanishing curl eigenfield $V + cW$ is contact homotopic to the one induced by $V$ provided that $|c|<c_0$.
\end{pr}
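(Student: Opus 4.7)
The plan is to exploit the linearity of the curl operator. Since $V$ and $W$ lie in the $\lambda$-eigenspace of curl, any real linear combination $U := aV+bW$ is again a curl eigenfield with the same eigenvalue $\lambda$, and for its dual $1$-form $\gamma = a\alpha + b\beta$ the computation recalled in Subsection~\ref{SS:Belta} yields
\[
\gamma \wedge d\gamma = \lambda\,|U|^2\,\mathrm{vol}_g,
\]
where $\alpha$ and $\beta$ are the $1$-forms dual to $V$ and $W$. In particular, $\gamma$ is a contact form of the same sign as $\lambda$ at every point where $U$ is nonzero. Producing a contact homotopy between the associated structures therefore reduces to finding a smooth path $(a(t),b(t)) \in \RR^2\setminus\{(0,0)\}$ along which $aV+bW$ never vanishes pointwise on $M$.

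For the first claim I would try the straight-line interpolation $V_t := (1-t)V + tW$, $t \in [0,1]$. If $V_t(p) = 0$ for some interior $t$, then $W(p) = -\tfrac{1-t}{t}V(p)$ is a strictly negative multiple of $V(p)$, which forces $p \in C_-$. Hence, when $C_- = \emptyset$, the family $V_t$ consists of nonvanishing curl eigenfields, and $\zeta_t := \ker \alpha_{V_t}$ is the required contact homotopy from $\zeta_V$ to $\zeta_W$. When instead $C_+ = \emptyset$, the analogous argument applied to $V_t := (1-t)V - tW$ shows that $V_t$ is nonvanishing for every $t\in[0,1]$; this path joins $V$ to $-W$, and since $\ker\alpha_{-W} = \ker\alpha_W$ and $(-\alpha)\wedge d(-\alpha) = \alpha\wedge d\alpha$, the family $\zeta_t$ is again a contact homotopy with the correct endpoints.

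For the second claim I would set
\[
c_0 := \frac{\min_M |V|}{\max_M |W|},
\]
which is positive since $M$ is closed and $V$ nowhere vanishes (the case $W \equiv 0$ is trivial). For any $c$ with $|c|<c_0$ and any $s\in[0,1]$, the reverse triangle inequality gives $|V+scW| \geq \min_M|V| - |c|\max_M|W| > 0$, so the straight-line path $V_s := V + scW$ consists of nonvanishing curl eigenfields with eigenvalue $\lambda$ and yields the desired contact homotopy between $\zeta_V$ and $\zeta_{V+cW}$. I do not anticipate any serious obstacle here; the only delicate point is the case $C_+=\emptyset$ in the first claim, where one must detour through $-W$ and invoke the invariance of both the plane field and the sign of $\alpha\wedge d\alpha$ under $\alpha \mapsto -\alpha$.
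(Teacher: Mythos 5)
Your proof is correct and follows essentially the same route as the paper: both arguments use the straight-line interpolation of the dual $1$-forms and reduce the contact condition to the nonvanishing of the interpolated eigenfield. Your organization is in fact a little cleaner: you observe at the outset that for any $\lambda$-eigenfield $U$ one has $\gamma\wedge d\gamma=\lambda\abs{U}^2\mathrm{vol}_g$, so that the contact condition along the path is \emph{literally} the pointwise nonvanishing of $(1-t)V+tW$, which you then rule out by elementary linear algebra on $C_\pm$; the paper expands the same quadratic in $t$ and argues via the discriminant and the Cauchy--Schwarz inequality, reaching the identical conclusion. The treatment of the case $C_+=\emptyset$ by passing to $-W$ (and noting $\ker(-\beta)=\ker\beta$) is exactly the paper's. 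The only substantive difference is in the second claim: your constant $c_0=\min_M\abs{V}/\max_M\abs{W}$ is in general strictly smaller than the paper's constant \eqref{eq.c0}, which is the minimum of $\abs{V}/\abs{W}$ taken only over the alignment set $C=C_+\cup C_-$ (and which the paper replaces by ``any $c$'' when $C=\emptyset$, a case your bound handles unnecessarily restrictively). Since the proposition explicitly points to \eqref{eq.c0}, your argument proves a quantitatively weaker version of the second statement, though the existence of \emph{some} positive $c_0$ --- which is what the applications in Section~\ref{S:final} need, up to recomputing the threshold --- is fully established.
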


\begin{proof}
We first assume that $C_{-}= \emptyset$. Let $\alpha$ and $\beta$ be the (nonvanishing) $1$-forms dual to $V$ and $W$, respectively. They satisfy the equations $\ast \dif\alpha=\lambda \alpha$ and $\ast \dif\beta=\lambda \beta$. Now consider the homotopy $\alpha_t:=t\alpha + (1-t)\beta$, $t\in [0,1]$, given by the linear interpolation between these 1-forms. It is straightforward to compute explicitly the $3$-form $\alpha_t \wedge \dif \alpha_t$:
\begin{equation*}
\begin{split}
\alpha_t \wedge \dif \alpha_t &= (t\alpha + (1-t)\beta)\wedge (t \lambda \ast \alpha + (1-t) \lambda \ast \beta)\\
&= \lambda\big\{ t^2 \abs{\alpha}^2 +  (1-t)^2 \abs{\beta}^2  + 2t(1-t)\langle\alpha, \beta \rangle  \big\} \mathrm{vol}_g\\
&= \lambda \big\{ \left[\abs{\alpha}^2 -2\langle\alpha, \beta \rangle + \abs{\beta}^2 \right] t^2 +
2\left[\langle\alpha, \beta \rangle - \abs{\beta}^2 \right] t
+ \abs{\beta}^2  \big\} \mathrm{vol}_g.
\end{split}
\end{equation*}
Let us suppose that there is a point $x \in M$ at which the above $3$-form vanishes for some $t\in[0,1]$. Then the quadratic polynomial in $t$ that multiplies $\mathrm{vol}_g$ in the above equation (evaluated at $x$), must have non-negative discriminant, which is equivalent to
$$
\langle \alpha, \beta \rangle^2 - \abs{\alpha}^2  \abs{\beta}^2  \geq 0\,.
$$
The Cauchy-Schwarz inequality then implies that $\alpha(x) = \theta \beta(x)$, for some constant $\theta > 0$, due to our hypothesis $C_{-}= \emptyset$. But then, at the point $x$,
$$\alpha_t(x) \wedge \dif \alpha_t(x) = \lambda \big((\theta -1)t+1\big)^2 \abs{\beta}^2 \mathrm{vol}_g\,,$$
and this can be zero only if $\theta \neq 1$ and $t=\frac{1}{1-\theta} \notin [0,1]$, which is a contradiction. We conclude that $\alpha_t$ is a contact form for all $t\in[0,1]$, as we wanted to prove. The case $C_{+}= \emptyset$ is treated analogously by working with $-\beta$ instead of $\beta$.

To prove the second statement, we first assume that $C:=C_+\cup C_-\neq \emptyset$. We introduce the constant
\begin{equation}\label{eq.c0}
0<c_0:=\text{min}_{x\in C}\Big\{\frac{\abs{\alpha}(x)}{\abs{\beta}(x)}\Big\}\,.
\end{equation}
This constant is well defined because the set $C$ is closed (and compact because of the compactness of $M$). It is obvious that the $1$-form $\alpha +c\beta$ is nonvanishing if $|c|<c_0$. Now consider the linear homotopy $\alpha_t:=\alpha + c t\beta$, $t\in [0,1]$. We have:
\begin{equation*}
\begin{split}
\alpha_t \wedge \dif \alpha_t &= (\alpha + ct\beta)\wedge (\lambda \ast \alpha + ct \lambda \ast \beta)= \lambda\big\{ c^2 t^2 \abs{\beta}^2  + 2 c t \langle\alpha, \beta \rangle + \abs{\alpha}^2  \big\} \mathrm{vol}_g.
\end{split}
\end{equation*}
Suppose that this expression vanishes at some point $x\in M$, for some $t\in [0,1]$. Then the quadratic polynomial in $t$ that multiplies $\mathrm{vol}_g$ in the above equation (evaluated at $x$) must have non-negative discriminant, which is again equivalent to $\langle \alpha, \beta \rangle^2 - \abs{\alpha}^2  \abs{\beta}^2  \geq 0$. By Cauchy-Schwarz inequality,
$\alpha(x) = \theta \beta(x)$, for some constant $\theta$. But then, at the point $x$,
$$\alpha_t(x) \wedge \dif \alpha_t(x) = \lambda \big(ct +\theta\big)^2 \abs{\beta}^2 \mathrm{vol}_g\,,$$
and this can be zero only if $\theta =-ct$, so that $\abs{\alpha}(x)/\abs{\beta}(x)=\abs{\theta} = \abs{c} t \leq \abs{c}  < c_0$, which contradicts the definition of the constant $c_0$.

In the case that $C=\emptyset$, it is obvious that the fields $V$ and $V+cW$ (curl eigenfields with the same eigenvalue $\lambda$) are not aligned at any point, so the first part of the proposition implies that the induced contact structures are contact homotopic for any $c\in\mathbb R$.
\end{proof}

\begin{re}\label{complementP3}
As in our setting the orientation on $M$ is fixed (by $\mathrm{vol}_g$) as well as the Reeb field (by the Beltrami field), an induced contact structure is implicitly oriented. So in the case $C_{+} = \emptyset$, where we proved that $\alpha$ is homotopic through contact forms with $-\beta$, from Gray stability we can actually deduce that the contact structure $\zeta_\alpha :=\ker \alpha$ is contact isotopic with $\ov \zeta_\beta$, the contact structure $\zeta_\beta :=\ker \beta$ with opposite orientation. On $M=\Ss^3$, essentially due to vanishing Euler class for any contact distribution, $e(\zeta)=0$, we can nevertheless obtain $\zeta_\alpha$ contact isotopic with $\zeta_\beta$   in the case $C_{+} = \emptyset$ as we can obviously do when $C_{-} = \emptyset$. 
\end{re}
%Notice that if $M=\Ss^3$, the hypothesis that the eigenvectors $V$ and $W$ are in the same eigenspace of $%\cu$ can be replaced with "$V$ and $W$ induce overtwisted contact structures" and it this case the %statement overlaps with \cite[Lemma 19]{duf}.

\subsection{Eigenfunctions of the Laplacian on $\mathbb T^2$ and $\mathbb S^2$}\label{nodal_sect}
In this subsection we prove some properties of the nodal sets of the eigenfunctions of the Laplacian on $\mathbb T^2=(\RR/2\pi \ZZ)^2$ and $\mathbb S^2$ (unit sphere in $\mathbb R^3$), both endowed with their canonical metrics. These results, which are of interest in themselves, will be instrumental in Section~\ref{S:torus} to prove Theorem~\ref{maint3}, and in Section~\ref{SS:Raf}.

Eigenfunctions of the Laplacian on $\mathbb T^2$ and $\mathbb S^2$ arise when constructing $\mathbb S^1$-invariant curl eigenfields on $\mathbb T^3$ and $\mathbb S^3$. Roughly speaking, one can obtain nonvanishing solutions to the Beltrami equation by ``lifting'', in a suitable way, generic eigenfunctions of the Laplacian on the quotient space of the $\mathbb S^1$-action. Using this construction and a criterion of Giroux~\cite{gir}, we can study the overtwistedness of the associated contact structures analyzing the nodal sets of the aforementioned eigenfunctions (specifically, their connectedness and contractibility). The connection between dividing sets on certain surfaces (including nodal sets of eigenfunctions) and contact geometry has also been exploited in~\cite{ghri,komtams,komphd,kom,lisi}.

A function $f$ is an eigenfunction of the Laplacian $\Delta:=\di \circ \nabla$ (also called the Laplace-Beltrami operator) if
\[
-\Delta f=\Lambda f\,,
\]
where $\Lambda\geq 0$ is the corresponding eigenvalue. We shall denote by $\mathcal{E}_\Lambda$ the vector space of eigenfunctions with eigenvalue $\Lambda$. It is well known that the spectrum of the Laplacian on $\mathbb S^2$ and $\mathbb T^2$ is given by $\{\Lambda=k(k+1):k\in\mathbb N\}$ and $\{\Lambda=|k|^2:k\in \mathbb Z^2\}$, respectively.

We recall that the \textit{nodal set} $\mathcal N(f)$ of an eigenfunction $f$ is the set of points where $f$ vanishes. A nodal set is \textit{regular} if $0$ is a regular value of $f$, i.e., $\nabla f$ does not vanish at any point of $\mathcal N(f)$.

Our first result in this subsection is that for any eigenvalue $\Lambda>0$, the nodal set of a generic eigenfunction in $\mathbb S^2$ or $\mathbb T^2$ is regular (and nonempty because any nonconstant eigenfunction has zero mean). Given an integer $r>0$, by generic we mean that there exists an open and dense set of eigenfunctions in $\mathcal E_\Lambda$ (in the $C^r$ topology) that satisfies the aforementioned property. Since the result holds on any dimension $n\geq 2$, we state it in its full generality:

\begin{pr} \label{density}
For each eigenvalue $\Lambda>0$ of the Laplacian on the round sphere $\Ss^n$ or on the flat torus $\Tt^n$, a generic eigenfunction in $\mathcal{E}_\Lambda$ has a regular nodal set.
\end{pr}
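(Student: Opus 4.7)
The plan is to apply parametric transversality to the evaluation map
$$\Phi \colon M \times \mathcal{E}_\Lambda \to \RR, \qquad \Phi(x,f) := f(x),$$
where $M$ stands for $\Ss^n$ or $\Tt^n$ with its canonical metric. Since $\mathcal{E}_\Lambda$ is finite-dimensional, all $C^r$ topologies on it agree with the natural Euclidean one, so ``generic'' is equivalent to ``open and dense'' (and in fact to ``complement of a set of measure zero'').

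First, I would verify that $\Phi$ is transverse to $\{0\}\subset\RR$. The partial derivative of $\Phi$ in the $\mathcal{E}_\Lambda$ direction at $(x_0,f_0)$ is the linear map $g \mapsto g(x_0)$, so surjectivity of $\dif\Phi_{(x_0,f_0)}$ reduces to producing a single $g\in\mathcal{E}_\Lambda$ with $g(x_0)\ne 0$. I would obtain this from the transitive action of the isometry group of $M$ ($\mathrm{SO}(n+1)$ for $\Ss^n$, the translation group for $\Tt^n$), together with the observation that this group preserves $\mathcal{E}_\Lambda$, since its elements commute with the Laplacian. Starting from any nonzero $g_0\in\mathcal{E}_\Lambda$ and any point $y_0$ with $g_0(y_0)\ne 0$, precomposing $g_0$ with an isometry sending $x_0$ to $y_0$ yields the required $g$.

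Transversality ensures that $Z := \Phi^{-1}(0)$ is a smooth codimension-one submanifold of $M\times\mathcal{E}_\Lambda$. Next I would examine the natural projection $\pi\colon Z\to\mathcal{E}_\Lambda$: a short computation of its differential shows that $(x,f)\in Z$ is a critical point of $\pi$ exactly when $\dif f_x=0$, i.e., when $x$ is a critical zero of $f$. Applying Sard's theorem to $\pi$ then yields that, outside a measure-zero subset of $\mathcal{E}_\Lambda$, every $f$ has a regular nodal set, which gives density.

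Openness is a direct consequence of the compactness of $M$: if $f_0$ has a regular nodal set, then the continuous function $x\mapsto |f_0(x)|^2 + |\nabla f_0(x)|^2$ is bounded below by a positive constant on $M$, and this condition persists under $C^1$-small perturbations, in particular under small perturbations inside the finite-dimensional space $\mathcal{E}_\Lambda$. The only point that goes beyond routine transversality is producing an eigenfunction that does not vanish at a prescribed point, but this is immediate from the transitivity of the isometry group on $\Ss^n$ and $\Tt^n$, so I do not foresee a serious obstacle.
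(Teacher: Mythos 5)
Your proof is correct and follows essentially the same route as the paper: both apply parametric transversality to the evaluation map on $M\times\mathcal{E}_\Lambda$ (the paper writes it in an orthonormal basis as $F(x,a)=\sum_j a_jf_j(x)$ and cites the parametric transversality theorem as a black box, while you inline its standard proof via Sard's theorem applied to the projection from $\Phi^{-1}(0)$), and both secure surjectivity of the differential by using the transitive isometry group action, which preserves $\mathcal{E}_\Lambda$. The openness argument via compactness of $M$ and persistence under $C^1$-small perturbations is likewise the same.
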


\begin{proof}
We first recall the following parametric transversality theorem~\cite[Theorem 6.35]{lee}: let $N$ and $M$ be two smooth manifolds, $X \subset M$ an embedded submanifold, and $\{F_s\}_{s\in S}$ a smooth family of maps $F_s:N\to M$ with $S$ a smooth manifold of parameters. If the map $F: N \times S \to M$, $F(\cdot,s):=F_s(\cdot)$, is transverse to $X$, then for almost every $s \in S$, the map $F_s: N \to M$ is transverse to $X$.

For our purposes consider $N=\Ss^n$ or $\Tt^n$, $S=\mathcal E_\Lambda \cong \RR^m$ (where $m$ is the multiplicity of the eigenvalue $\Lambda$), $M=\RR$ and $X=\{0\}$. Let $\{f_j\}_{j=1,\dots, m}$ be an orthonormal basis of $\mathcal{E}_\Lambda$ and define:
$$
F: N \times \RR^m \to \RR\,, \quad F(x, a)=a_1 f_1(x) + \dots + a_m f_m(x)\,,
$$
with $x\in N$ and $a=(a_1,\dots, a_m)\in \mathbb R^m$. Let us now check that $F$ is transverse to $X=\{0\}$, that is $\nabla_{x,a} F$ does not vanish at the points $(x,a)$ for which $F(x,a)=0$. Here $\nabla_{x,a}$ denotes the gradient computed with respect to the variables $x\in N$ and $a\in \mathbb R^m$. A straightforward computation shows that if $\nabla_{x,a}F|_{(x_0,a_0)}=0$ then $f_1(x_0)=\dots =f_m(x_0)=0$, i.e., all the eigenfunctions of the basis vanish at the same point $x_0\in N$, and hence any $\Lambda$-eigenfunction.

Let us now consider a new orthonormal basis of the eigenspace $\mathcal{E}_\Lambda$ given by $\{\widetilde{f}_j\}_{j=1,\dots, m}$, where
$$\widetilde{f}_j := f_j \circ g^{-1}\,,$$
with $g\in \mathrm{Isom}(N)$ an isometry of $N$; the fact that this is an orthonormal basis is obvious because an isometry preserves the eigenspaces of the Laplacian and the scalar product. At the point $g \cdot x_0$ we have $\widetilde{f}_1(g \cdot x_0) = \dots = \widetilde{f}_m (g \cdot x_0)=0$, and therefore any eigenfunction $f\in \mathcal{E}_\Lambda$ vanishes at the point $g \cdot x_0$, i.e., $f(g \cdot x_0)=0$. Since, on $\Ss^n$ or $\Tt^n$, the action $\cdot$ of the isometry group is transitive, it follows that any $\Lambda$-eigenfunction $f$ vanishes on the whole $N$, which is a contradiction. Therefore $F$ has no critical points in $N\times \mathbb R^m$, so in particular it is transverse to $X=\{0\}$.

By the parametric transversality theorem stated above, $F_a$ is transverse to $X=\{0\}$ for almost all values of $a\in \RR^m$, which is equivalent to the fact that the nodal set of $F_a$ is regular for almost all (and hence for a dense set of) values of $a$. As the condition of being regular is open in the $C^r$ topology ($r\geq 1$), we conclude that there is an open and dense set of $\Lambda$-eigenfunctions whose nodal set is regular, as we wanted to show.
\end{proof}

The second main result of this subsection establishes the existence of eigenfunctions on $\mathbb T^2$ with regular nodal sets having a contractible connected component. By a contractible component we mean that $\mathbb T^2\backslash \mathcal N(f)$ has a component diffeomorphic to a disk. The proof makes use of a remarkable inverse localization theorem proved in~\cite{ept0,ept} which allows us to transplant the nodal set of a monochromatic wave in $\mathbb R^2$ into the nodal set of an eigenfunction in $\mathbb T^2$ with high eigenvalue.

\begin{pr} \label{nodalcirc}
There exists an infinite sequence of eigenvalues $\{\Lambda_\ell\}_{\ell \in \NN^*}$ and corresponding eigenfunctions $f_\ell$ of the Laplacian on $\Tt^2$ such that, for each $\ell$, the nodal set of $f_\ell$ is regular, disconnected, and contains a contractible connected component.
\end{pr}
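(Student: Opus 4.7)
The plan is to exploit the inverse localization theorem of Enciso--Peralta-Salas--Torres de Lizaur~\cite{ept0,ept}, which transplants the local behaviour of a monochromatic wave in $\RR^2$ (a solution of $\Delta h + h = 0$) into a high-frequency eigenfunction of the Laplacian on $\Tt^2$, with arbitrarily small $C^r$ error on any prescribed compact set after rescaling by $\sqrt{\Lambda}$. The model I would take is the radially symmetric Bessel function $h(x) := J_0(|x|)$, whose nodal set in $\RR^2$ consists of the concentric circles $\{|x|=z_k\}$, where $z_k$ is the $k$-th positive zero of $J_0$. Since these zeros are simple, $\nabla h$ does not vanish on any of these circles, so $h$ has a regular nodal set; and the ball $B_R$ contains at least two disjoint such circles provided $R>z_2$.

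With this model fixed, the argument would proceed in three steps. First, apply the inverse localization theorem with $r=1$ and the above $R$ to produce an infinite sequence of eigenvalues $\Lambda_\ell \to \infty$, associated eigenfunctions $\tilde f_\ell \in \mathcal{E}_{\Lambda_\ell}$, and base points $p_\ell\in\Tt^2$ such that the rescaled functions
$$
g_\ell(x) := \tilde f_\ell\bigl(p_\ell + x/\sqrt{\Lambda_\ell}\bigr)
$$
(well defined on $B_R$ once $\ell$ is large, since the scale $R/\sqrt{\Lambda_\ell}$ is smaller than the injectivity radius of $\Tt^2$) converge to $h$ in $C^1(B_R)$. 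Second, invoke the structural stability of regular zero sets, namely an implicit-function-theorem argument on a tubular neighbourhood of each nodal circle of $h$: for $\ell$ large the nodal set of $g_\ell$ in $B_R$ contains, near each of the two selected nodal circles of $h$, a single smooth embedded closed curve $C^1$-close to it and on which $\nabla g_\ell$ does not vanish. Rescaling back, the nodal set of $\tilde f_\ell$ contains two disjoint regular embedded circles inside the geodesic ball of radius $R/\sqrt{\Lambda_\ell}$ around $p_\ell$, each bounding a topological disk in $\Tt^2$. Hence both are contractible and the nodal set is disconnected.

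Third, to upgrade from two regular contractible components to a globally regular nodal set, I would combine the previous step with Proposition~\ref{density}: since regular nodal sets form a dense subset of $\mathcal{E}_{\Lambda_\ell}$ in the $C^1$ topology, a sufficiently small perturbation $f_\ell$ of $\tilde f_\ell$ inside the same eigenspace has globally regular nodal set, and by the same structural stability argument as before the two disjoint contractible circles constructed above persist. The main obstacle in this plan is the inverse localization theorem itself, which is a delicate result from~\cite{ept0,ept} that I would cite as a black box; granted this input, the remaining ingredients (simplicity of zeros of $J_0$, the implicit function theorem, and Proposition~\ref{density}) are routine.
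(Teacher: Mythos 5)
Your proposal is correct and follows essentially the same route as the paper: the model $J_0(|x|)$ with its concentric nodal circles, the inverse localization theorem of~\cite{ept0,ept}, structural stability of the transplanted regular nodal curves (the paper invokes Thom's isotopy theorem where you use the implicit function theorem, which is the same mechanism), and a final perturbation via Proposition~\ref{density} to achieve global regularity while preserving the contractible components. The only cosmetic difference is that the paper works in the disk of radius $10$ and keeps three nodal circles rather than two.
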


\begin{proof} Let $(r, \theta)$ denote the polar coordinates in $\RR^2$. It is well known that the analytic function $w:=J_0(r)$ is a solution of the Helmholtz equation $\Delta w + w =0$ in $\mathbb R^2$ (i.e., a monochromatic wave). By definition, the nodal set of $w$ consists of concentric circles of radii given by the zeros of the Bessel function $J_0$: $0 < \alpha_1^0 < \alpha_2^0 < \dots$. We denote the disk in $\mathbb R^2$ of radius $10$ by $B$; the function $w$ has three components of its nodal set contained in $B$.

For any positive integer $r$, any $\delta>0$, and any point $p\in\mathbb T^2$, the inverse localization theorem~\cite[Theorem 3.2]{ept} implies that there exists an infinite sequence of natural numbers $\{N_\ell\}_{\ell \in \NN^*}$ and eigenfunctions $\widetilde f_\ell$ of the Laplacian on $\Tt^2$ with eigenvalue $\Lambda_{\ell} := N_\ell^2$ such that:
\begin{equation}\label{approx}
\left\Vert  w(\cdot) - \widetilde f_\ell \circ \Psi\left(\frac{\cdot}{N_\ell}\right)\right\Vert_{C^r(B)} \leq \delta\,,
\end{equation}
where $\Psi^{-1}$ is a patch of normal geodesic coordinates $\Psi^{-1}: \mathbb{B}_\rho \to B_\rho$ centered at $p$, from the ball $\mathbb{B}_\rho\subset \mathbb T^2$ of radius $\rho$ to the Euclidean ball $B_\rho$ of radius $\rho$.

Since the nodal set of $w$ is regular, Equation~\eqref{approx} and Thom's isotopy theorem imply that $\widetilde f_\ell$ has a nodal set with three components isotopic to three concentric circles in the ball of radius $10/N_\ell$ centered at $p\in \mathbb T^2$ (the isotopy being close to the identity). Although this part of the nodal set of $\widetilde f_\ell$ is regular, it may happen that the whole nodal set of $\widetilde f_\ell$ contains some critical points.

Accordingly, using Proposition~\ref{density}, we can take a $\Lambda_\ell$-eigenfunction whose nodal set is regular as close to $\widetilde f_\ell$ as one wishes. Therefore one may assume that $\widetilde f_\ell$ is $\varepsilon_\ell$-close in the $C^r$ norm to an eigenfunction $f_\ell$ whose nodal set is regular. Since we obviously have $\left\Vert \widetilde f_\ell \circ \Psi\left(\frac{\cdot}{N_\ell}\right) -  f_\ell \circ \Psi\left(\frac{\cdot}{N_\ell}\right)\right\Vert_{C^r(B)} \leq \varepsilon_\ell$ (using that $N_\ell^{-j}\leq 1$ if $j\in\{0,1,\cdots,r\}$), we conclude that:
\begin{equation*}
\begin{split}
\left\Vert  w -f_\ell \circ \Psi\left(\frac{\cdot}{N_\ell}\right)\right\Vert_{C^r(B)} & \leq \left\Vert  w - \widetilde f_\ell \circ \Psi\left(\frac{\cdot}{N_\ell}\right)\right\Vert_{C^r(B)} + \left\Vert \widetilde f_\ell \circ \Psi\left(\frac{\cdot}{N_\ell}\right) -  f_\ell \circ \Psi\left(\frac{\cdot}{N_\ell}\right)\right\Vert_{C^r(B)}\\
& \leq \delta + \varepsilon_\ell < \tfrac{3}{2}\delta
\end{split}
\end{equation*}
by choosing $\varepsilon_\ell < \frac{1}{2}\delta$.

Therefore, if $\delta$ is small enough, Thom's isotopy theorem implies again that $f_\ell$ has a nodal set with three components isotopic to three concentric circles in the ball of radius $10/N_\ell$, thus completing the proof.
\end{proof}

\section{Overtwisted Beltrami fields on $\Ss^3$: proof of Theorem~\ref{maint}}\label{S:sphere}

For convenience, we shall describe the sphere $\Ss^3$ as the set of points $(z_1, z_2) \in \CC^2$ with $\abs{z_1}^2 + \abs{z_2}^2 =1$. Denoting $z_j= x_j + \ii y_j$, $j\in\{1,2\}$, the Hopf coordinates are defined as $(x_1, y_1, x_2, y_2)=(\cos s \, e^{\ii\phi_1}, \sin s \, e^{\ii\phi_2})$, $s \in [0, \pi/2]$, $\phi_{1,2} \in [0, 2\pi)$. In these coordinates, the induced round metric on $\Ss^3$ reads as
$$g=\dif s^2 + \cos ^2 s \, \dif \phi_1^2 + \sin ^2 s \, \dif \phi_2^2\,,$$
and the unit (anti-)Hopf vector fields, dual to the (anti-)standard contact forms $\eta$ and $\eta^\prime$, respectively, are
$$
R =\partial_{\phi_1} + \partial_{\phi_2}\,, \quad \quad
R^\prime = \partial_{\phi_1} - \partial_{\phi_2}\,,
$$
with
$$
\eta =\cos ^2 s\dif\phi_1 + \sin ^2 s\dif\phi_2\,, \quad \quad
\eta^\prime = \cos ^2 s\,\dif\phi_1 - \sin ^2 s\,\dif\phi_2\,.
$$

We choose the orientation on $\Ss^3$ such that we have $\cu R = 2R$ and $\cu R^\prime = -2R^\prime$.
Therefore $R, R'$ are (nonvanishing) curl eigenfields (with eigenvalue $2$ and $-2$ respectively), and the induced contact structures are tight. In fact, notice that $R$ and $R'$ (and hence $\eta$ and $\eta^\prime$) are related via an orientation reversing contactomorphism.

It is well-known that the spectrum of the curl operator on $\Ss^3$ is given by
$$\{\lambda = \pm(k+2), k\in \NN \}\,.$$
In what follows, let us focus on the positive part of the spectrum, the analysis of the negative part being completely analogous (see Remark~\ref{R:negative} below).

To write the components of a vector field on $\mathbb S^3$ we shall use the standard (positively oriented) orthonormal global frame of Hopf vector fields $\{R,X_1,X_2\}$, where these fields have the explicit expressions
\begin{equation} \label{stdframe}
\begin{split}
R     & = - y_1 \partial_{x_1} + x_1 \partial_{y_1} - y_2 \partial_{x_2} + x_2 \partial_{y_2}\,,\\
X_1 & =-x_2 \partial_{x_1} + y_2 \partial_{y_1} + x_1 \partial_{x_2} - y_1 \partial_{y_2}\,,\\
X_2 & =-y_2 \partial_{x_1} - x_2 \partial_{y_1} + y_1 \partial_{x_2} + x_1 \partial_{y_2}\,.
\end{split}
\end{equation}
Notice that $X_1$ and $X_2$ are given by the action of an isometry of $\mathbb S^3$ on the field $R$ (the isometries being given by $\phi_1(x_1,y_1,x_2,y_2)=(x_2,-x_1,y_1,-y_2)$ and $\phi_2(x_1,y_1,x_2,y_2)=(y_1,x_2,-y_2,x_1)$).

Accordingly, an eigenfield $V$ of curl with eigenvalue $\lambda$ can be written as:
$$
V=fR+f_1X_1+f_2X_2
$$
for some functions $f,f_1,f_2$. The following key result was proved in~\cite[Proposition~1]{pss} (see also~\cite{kom}):
\begin{pr}\label{P:comps}
The functions $f,f_1,f_2$ are eigenfunctions of the Laplacian on $\mathbb S^3$ with eigenvalue $\lambda(\lambda-2)$.
\end{pr}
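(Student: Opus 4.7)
The plan is to combine the Bochner--Weitzenb\"ock identity on round $\Ss^3$ with the Lie group structure of $\Ss^3 \simeq SU(2)$ so as to reduce the first-order curl equation on $V$ to three decoupled scalar Laplace equations on its components.

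First I would pass to the dual 1-form $\alpha$ of $V$. The hypothesis $\cu V = \lambda V$ reads $\ast d\alpha = \lambda\alpha$, while divergence-freeness gives $\delta\alpha = 0$. Since $\ast\ast = \mathrm{id}$ on 1-forms in dimension three, $d\alpha = \lambda\ast\alpha$, hence $\delta d\alpha = \lambda\,\delta\!\ast\!\alpha = \lambda^2\alpha$. Together with $d\delta\alpha = 0$ this yields
\[
\Delta_H \alpha \;=\; (\delta d + d\delta)\alpha \;=\; \lambda^2 \alpha,
\]
where $\Delta_H$ is the Hodge Laplacian on 1-forms. The Bochner--Weitzenb\"ock identity $\Delta_H = \nabla^{\ast}\nabla + \Ric$ together with $\Ric = 2g$ on round $\Ss^3$ then gives the vector-field identity
\[
\nabla^{\ast}\nabla V \;=\; (\lambda^2 - 2)\,V.
\]

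The second step is to expand this identity in the Hopf frame $\{R,X_1,X_2\} =: \{E_1,E_2,E_3\}$. These are orthonormal left-invariant Killing fields of the bi-invariant round metric on $\Ss^3 \simeq SU(2)$, with brackets $[E_a,E_b] = -2\varepsilon_{abc}E_c$, so the Levi-Civita connection satisfies $\nabla_{E_a}E_b = \tfrac12[E_a,E_b]$ and in particular $\nabla_{E_a}E_a = 0$. Writing $V = f^c E_c$ (with $(f^1,f^2,f^3)=(f,f_1,f_2)$), a direct computation gives
\[
\nabla^{\ast}\nabla V \;=\; \sum_c (\nabla^{\ast}\nabla f^c)\,E_c \;-\; 2\sum_{b,c} E_b(f^c)\,\nabla_{E_b}E_c \;+\; \sum_c f^c\,\nabla^{\ast}\nabla E_c.
\]
Each $E_c$ is itself a unit curl eigenfield with eigenvalue $\pm 2$, so applying the first step to $E_c$ yields $\nabla^{\ast}\nabla E_c = 2E_c$; on functions, $\nabla^{\ast}\nabla f^c = -\Delta f^c$.

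The decisive step is that the frame-mixing middle term is controlled by the first-order curl equation itself, not by its square. Using Cartan's formula in the Killing frame, $\cu V = \lambda V$ translates into
\[
\sum_{a,b}\varepsilon_{abd}\, E_a(f^b) \;=\; (\lambda - 2)\,f^d,
\]
and hence $\sum_{b,c} E_b(f^c)\,\nabla_{E_b}E_c = -(\lambda-2)V$. Substituting this back together with $\nabla^{\ast}\nabla E_c = 2E_c$ and equating coefficients of $E_c$, I obtain $-\Delta f^c = \lambda(\lambda-2)f^c$ for $c=1,2,3$, which is exactly the claim for $f,f_1,f_2$.

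The main obstacle is really only bookkeeping: keeping sign conventions consistent among the Hodge star in dimension three, the codifferential, the geometric and Hodge Laplacians, the Ricci normalization on $\Ss^3$, and the structure constants of the Hopf frame. Conceptually, the proof is a standard Weitzenb\"ock-type calculation in which the first-order curl equation is crucially invoked to cancel the cross-derivative term that a naive Bochner expansion leaves behind.
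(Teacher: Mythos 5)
Your argument is correct, and it is worth noting that the paper itself offers no proof of this proposition: it simply cites \cite[Proposition~1]{pss} (where the statement is established in the more general Sasakian setting). Your derivation is therefore a self-contained alternative, and all the key steps check out against the paper's conventions. Indeed, $\delta\alpha=0$ is automatic for a curl eigenfield, so $\Delta_H\alpha=\lambda^2\alpha$ holds; with $\Ric=2g$ on the unit round $\Ss^3$ this gives $\nabla^{*}\nabla V=(\lambda^2-2)V$. The structure constants $[E_a,E_b]=-2\varepsilon_{abc}E_c$ are the ones forced by the paper's normalization $\cu R=2R$ for the frame \eqref{stdframe} (with this sign one recovers $\cu E_c=2E_c$ and $\nabla^{*}\nabla E_c=2E_c$), the frame expression of the curl equation is $\sum_{a,b}\varepsilon_{abd}E_a(f^b)=(\lambda-2)f^d$ exactly as you state, and substituting into the Bochner expansion yields $-\Delta f^c=(\lambda^2-2-2(\lambda-2)-2)f^c=\lambda(\lambda-2)f^c$, matching the claimed eigenvalue. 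The one place where a careless sign would silently produce the wrong answer ($\lambda(\lambda+2)$ instead of $\lambda(\lambda-2)$) is the sign of the structure constants, which you correctly flag as the bookkeeping risk; pinning it to the known identity $\cu R=2R$, as above, removes the ambiguity. In short: a valid proof, filling in a step the paper delegates to a reference.
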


Let us first prove the first claim in Theorem~\ref{maint}, i.e., that any nonvanishing curl eigenfield on $\mathbb S^3$ has even eigenvalue.

\begin{lm}\label{L:odd}
Any curl eigenfield on $\mathbb S^3$ with odd eigenvalue has a nonempty zero set.
\end{lm}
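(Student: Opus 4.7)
The plan is to assume for contradiction that $V$ is a nonvanishing curl eigenfield on $\Ss^3$ whose eigenvalue $\lambda$ is odd, and derive an impossibility via an antipodal-symmetry computation and the Borsuk--Ulam theorem. Since $\lambda\in\{\pm(k+2):k\in\NN\}$, I may assume $\lambda=k+2>0$ (the negative case is identical, or follows by reversing orientation), and $\lambda$ odd forces $k$ odd.

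I would first decompose $V=fR+f_1X_1+f_2X_2$ in the global orthonormal parallelization \eqref{stdframe}, and invoke Proposition~\ref{P:comps} to conclude that $f,f_1,f_2$ are Laplacian eigenfunctions on $\Ss^3$ with eigenvalue $\lambda(\lambda-2)=k(k+2)$. The standard description of the Laplacian spectrum on $\Ss^3$ then identifies these components as restrictions to $\Ss^3$ of harmonic homogeneous polynomials of degree $k$ on $\RR^4$. In particular, under the antipodal map $\sigma(x)=-x$ they satisfy
$$
f(-x)=(-1)^k f(x),\qquad f_j(-x)=(-1)^k f_j(x),\quad j=1,2.
$$

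The next step uses the parallelization $T\Ss^3\cong \Ss^3\times\RR^3$ induced by $\{R,X_1,X_2\}$, under which $V$ corresponds to the $\RR^3$-valued map $x\mapsto(f(x),f_1(x),f_2(x))$. Because $V$ is assumed nonvanishing, this triple is nowhere zero, so normalizing yields a continuous map
$$
\varphi:\Ss^3\to\Ss^2,\qquad \varphi(x)=\frac{(f(x),f_1(x),f_2(x))}{\|(f(x),f_1(x),f_2(x))\|}\,.
$$
The parity computation above gives $\varphi(-x)=(-1)^k\varphi(x)$; for $k$ odd this means $\varphi$ is antipodal-equivariant (odd). The Borsuk--Ulam theorem, however, rules out any continuous odd map $\Ss^3\to\Ss^2$, yielding a contradiction and completing the argument.

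The main conceptual step — and the only nontrivial one — is recognizing that the global trivialization $\{R,X_1,X_2\}$ transports the polynomial parity of the components $f,f_1,f_2$ into an antipodal symmetry of the Gauss-type map $\varphi$, thereby reducing the eigenvalue parity question to Borsuk--Ulam. Once that link is made, the remaining checks (that $k$ and $\lambda$ have the same parity, and that the components are genuinely degree-$k$ homogeneous polynomials) are routine.
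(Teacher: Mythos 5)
Your proof is correct and takes essentially the same route as the paper's: both decompose $V$ in the frame $\{R,X_1,X_2\}$, use Proposition~\ref{P:comps} to identify the components $f,f_1,f_2$ as restrictions of odd-degree homogeneous harmonic polynomials on $\RR^4$, and conclude with Borsuk--Ulam. The only cosmetic difference is that you normalize to get an odd map $\Ss^3\to\Ss^2$ and invoke the ``no antipodal map'' formulation, whereas the paper applies the equivalent ``an odd map $\Ss^3\to\RR^3$ has a zero'' version directly to $(f,f_1,f_2)$.
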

\begin{proof}
Let $V$ be a curl eigenfield with odd eigenvalue $\lambda$. By Proposition~\ref{P:comps}, the components $f,f_1,f_2$ of $V$ are eigenfunctions of the Laplacian with eigenvalue $\lambda(\lambda -2)$. Therefore $f,f_1,f_2$ are the restriction on $\mathbb S^3$ of homogeneous harmonic polynomials on $\RR^4$ of degree $\lambda-2$; the common degree of these polynomials is then odd by our hypothesis. Therefore, Borsuk-Ulam theorem~\cite{borsuk} implies that $f,f_1,f_2$ have a common zero, which is a zero of the Beltrami field $V$, as we wanted to prove.
\end{proof}

To prove the second part of Theorem~\ref{maint}, let us now construct an explicit nonvanishing curl eigenfield for every even eigenvalue $\lambda\geq 4$. To achieve this we use the ansatz proposed in~\cite[Example 4.5]{kkpspaper} to construct steady Euler flows on $\mathbb S^3$:
\begin{equation}\label{kkps}
V = F(\cos^2 s) R + G(\cos ^ 2 s) R^\prime\,,
\end{equation}
where $F:\mathbb R\to \mathbb R$ and $G:\mathbb R\to \mathbb R$ are smooth functions that will be fixed later.

A straightforward computation shows (see~\cite[Example 4.5]{kkpspaper}) that $\cu V$ is given by
$$
\cu V=[(2z-1)F' +2F+G']R-[(2z-1)G' +2G+F']R^\prime\,,
$$
where we are using the notation $z:=\cos^2 s$. Using this expression, it then follows that a vector field on $\Ss^3$ of the form~\eqref{kkps} is a curl eigenfield with eigenvalue $\lambda$ if and only if
\begin{equation}\label{rotsyst}
\left\{
\begin{array}{lll}
(2z-1)F' +2F+G' =\lambda F\,, \\[2mm]
(2z-1)G' +2G+F' =-\lambda G\,.
\end{array}
\right.
\end{equation}

These equations imply that the function $F$ is a solution of the following hypergeometric equation:
$$
4z(z-1)F'' +8(2z-1)F' - (\lambda+ 4)(\lambda -2)F=0\,,
$$
which admits the analytic solution (in terms of a hypergeometric function) $F(z) =  _{2}\!\!F_1(1-\frac{\lambda}{2}, 2+\frac{\lambda}{2}, 2; \  z)$; this is the unique solution (up to a constant factor) which is bounded at $z=0$. The second component $G$ is determined by $F$ via the relation
$$G(z)=\frac{1}{\lambda + 2} (4 z (z - 1) F'(z) - (\lambda - 2) (2 z - 1) F(z))\,.$$

After some easy computations, we eventually obtain, for even eigenvalue $\lambda = 2m$, $m\geq 2$, the expressions (we introduce the subscript $m$ to emphasize the dependence of the functions $F$ and $G$ with the eigenvalue)
$$
F_m(z) =  _{2}\!\!F_1(1-m, 2+m, 2; \  z)\,, \quad
G_m(z) = \frac{m-1}{m+1} \ _{2}F_1(2-m, 1+m, 2; \  z)\,.
$$
These hypergeometric functions can be written in terms of two consecutive members of the family of orthogonal Jacobi polynomials $\{P_m^{(1,1)}\}_{m\geq 2}$ (cf. \cite[Definition 2.5.1]{specfunc}), of degree $m-1$ and $m-2$, respectively, that is
\begin{equation}\label{jac}
F_m(z)=\frac{1}{m}P_{m-1}^{(1,1)}(1-2z)\,, \qquad G_m(z)=\frac{1}{m+1}P_{m-2}^{(1,1)}(1-2z)\,.
\end{equation}
In fact, these functions actually belong to a special class of Jacobi polynomials, known as \textit{ultraspherical} or \textit{Gegenbauer polynomials} \footnote{To fix conventions, we take as definition \cite[p. 302]{specfunc} of Gegenbauer polynomials $P_n^{(\lambda)}$ (denoted also $C_n^{(\lambda)}$) the coefficients of the generating function $(1 - 2 t x + t^2)^{-\lambda}=\sum_{n=0}^\infty P_n^{(\lambda)}(x)t^n$. This convention is also used in Wolfram Mathematica, the software we have used for checking our computations.}, and we have
\begin{equation}\label{jac2}
F_m(z)=\frac{P_{m-1}^{(\frac{3}{2})}(1-2z)}{P_{m-1}^{(\frac{3}{2})}(1)}\,, \qquad G_m(z)=\frac{m-1}{m+1} \cdot \frac{P_{m-2}^{(\frac{3}{2})}(1-2z)}{P_{m-2}^{(\frac{3}{2})}(1)}\,.
\end{equation}

Accordingly, the zeros of $F_m$ and $G_m$ separate each other (or in other words, they interlace) and, in particular, $F_m$ and $G_m$ cannot have common zeros (see e.g.~\cite[Theorem 5.4.2]{specfunc}). Therefore, curl eigenfields of the form~\eqref{kkps}, associated to even eigenvalues, are nowhere vanishing. Indeed, on the complement of the Hopf link $\{s=0\}\cup \{s=\pi/2\}$, the vector fields $R$ and $R^\prime$ are linearly independent, so $V$ in~\eqref{kkps} cannot be zero because the functions $F_m$ and $G_m$ do not vanish simultaneously. On the Hopf link, $R$ and $R^\prime$ are collinear; a straightforward computation shows that
$$
V|_{s=0}=\frac{2(-1)^{m+1}}{m+1}\,\partial_{\phi_1} \qquad V|_{s=\pi/2}=\frac{2}{m+1}\,\partial_{\phi_2}\,,
$$
thus following that $V$ does not vanish on the Hopf link either.

Summarizing, we have constructed nonvanishing curl eigenfields of the form~\eqref{kkps} (and coefficients given by~\eqref{jac}) with eigenvalues $\lambda=2m$, $m\geq2$. As explained in Subsection~\ref{SS:Belta}, the dual $1$-forms of these fields are contact forms, so for each (positive) even eigenvalue of curl on
$\mathbb S^3$, there is a curl eigenfield that defines a contact structure. The following proposition shows that all these contact structures are overtwisted. The proof exploits the fact that these contact forms are $\Ss^1$-invariant (see also~\cite{kom} for other applications of this idea in the context of curl eigenfields).

\begin{pr}\label{P:OT}
The contact structures associated with the curl eigenfields~\eqref{kkps} with coefficients given by~\eqref{jac} are overtwisted.
\end{pr}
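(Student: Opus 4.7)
The plan is to apply Giroux's criterion~\cite{gir} for $\Ss^1$-invariant contact structures on circle bundles over $\Ss^2$, using the free anti-Hopf action on $\Ss^3$ generated by $R'=\partial_{\phi_1}-\partial_{\phi_2}$. Since $F_m,G_m$ depend only on $s$, the contact form dual to $V_m$,
\[
\alpha_m = (F_m+G_m)\cos^2 s\,\dif\phi_1 + (F_m-G_m)\sin^2 s\,\dif\phi_2,
\]
is $\Tt^2$-invariant, hence in particular $R'$-invariant. The anti-Hopf action is free, so $\Ss^3\to \Ss^2$ is a principal $\Ss^1$-bundle and the \emph{dividing set} of $\alpha_m$ is the image in $\Ss^2$ of the zero locus of the invariant function $\alpha_m(R')$. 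Giroux's theorem asserts that such an $\Ss^1$-invariant contact structure on the Hopf bundle over $\Ss^2$ is tight if and only if its dividing set is either empty or a single simple closed curve.

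The key step is to identify $\alpha_m(R')$ with a multiple of a Legendre polynomial. Writing $z=\cos^2 s$, a direct expansion gives
\[
\alpha_m(R') = (2z-1)F_m(z)+G_m(z).
\]
Plugging in the expressions~\eqref{jac2} for $F_m,G_m$ as normalised Gegenbauer polynomials, and invoking the classical identity $P_{n-1}^{(3/2)}(x)=\tfrac{d}{dx}P_n^{(1/2)}(x)$ (the case $\lambda=1/2$ of $\tfrac{d}{dx}P_n^{(\lambda)}=2\lambda P_{n-1}^{(\lambda+1)}$) together with the standard Legendre identity $xP_n^{(1/2)\,\prime}(x)-P_{n-1}^{(1/2)\,\prime}(x)=nP_n^{(1/2)}(x)$, a short manipulation (using the fact that the normalisation constants in~\eqref{jac2} both equal $2/(m(m+1))$) yields
\[
\alpha_m(R') = -\tfrac{2}{m+1}\,P_m^{(1/2)}(1-2\cos^2 s),
\]
that is, up to a nonzero constant factor, $\alpha_m(R')$ descends to the zonal spherical harmonic $P_m^{(1/2)}$ of degree $m$ on the quotient $\Ss^2$.

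Since the Legendre polynomial $P_m^{(1/2)}$ has exactly $m$ simple zeros in $(-1,1)$, the set $\{\alpha_m(R')=0\}$ is the union of $m$ disjoint tori $\{s=s_j\}$ in $\Ss^3$, whose images under the anti-Hopf projection are $m$ disjoint latitude circles on $\Ss^2$. Hence for every $m\geq 2$ the dividing set has at least two connected components, and Giroux's criterion forces $\alpha_m$ to be overtwisted. The main obstacle is the algebraic step that collapses the Jacobi/Gegenbauer expressions for $F_m,G_m$ into the clean Legendre form above; once that identity is established, overtwistedness follows at once from the general $\Ss^1$-invariant theory, while the case of negative eigenvalues is handled symmetrically by the orientation-reversing contactomorphism exchanging $R$ and $R'$.
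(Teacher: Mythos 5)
Your argument is correct, but it runs along a genuinely different track from the paper's. The paper works with the \emph{Hopf} action generated by $R$: the characteristic surface is $\{F_m(z)+(2z-1)G_m(z)=0\}$, and since for the Euler number $-1$ bundle Giroux's criterion reads ``tight iff the dividing set is empty'', it suffices to produce \emph{one} zero, which the paper does via the interlacing of the zeros of the consecutive Jacobi polynomials $F_m$ and $G_m$ (Lemma~\ref{L:zeros}). You instead use the \emph{anti-Hopf} action generated by $R'$ and compute the dividing function $\alpha_m(R')=(2z-1)F_m(z)+G_m(z)$ in closed form. Your Legendre identity is right: with $x=1-2z$ one has $F_m=\tfrac{2}{m(m+1)}P_m'(x)$ and $G_m=\tfrac{2}{m(m+1)}P_{m-1}'(x)$, so $-xF_m+G_m=-\tfrac{2}{m(m+1)}\bigl(xP_m'(x)-P_{m-1}'(x)\bigr)=-\tfrac{2}{m+1}P_m(x)$; I checked this against the explicit $V_2,V_3$ of Example~\ref{concretexi}. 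This buys you more than the paper proves: the $R'$-dividing set consists of exactly $m$ disjoint circles (and, by the companion identity $F_m-xG_m=\tfrac{2}{m+1}P_{m-1}(x)$, the paper's $R$-characteristic surface is exactly $m-1$ tori). The price is a slightly heavier reliance on the precise form of Giroux's criterion.

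That reliance is the one point you should tighten. The ``if and only if'' you attribute to Giroux (``tight iff the dividing set is empty or a single circle'') is \emph{bundle-dependent}: for the Hopf bundle of Euler number $-1$ the correct statement is ``tight iff empty'' (indeed $V_2$ has a single $R$-dividing circle yet is overtwisted, so your version applied to the $R$-action would give the wrong answer). Applied to the anti-Hopf bundle your statement happens to be consistent (the standard tight $\eta$ has exactly one $R'$-dividing circle), but you do not need the delicate direction at all: since your $m\geq 2$ latitude circles leave disk components (the polar caps) in their complement in $\Ss^2$, the robust part of~\cite[Proposition 4.1a]{gir} --- if the complement of the dividing set has a disk component, tightness forces the dividing set to be connected --- already yields overtwistedness. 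State and use that implication (as the paper does in Section~\ref{S:torus}) rather than the orientation- and Euler-number-sensitive equivalence, and your proof is complete.
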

\begin{proof}
To emphasize the dependence with the eigenvalue, let us denote the curl eigenfields by $V_m$ ($\cu V_m=2m V_m$). They are Reeb fields (up to rescaling) for the contact forms on $\Ss^3$ given by their dual 1-forms $\alpha_m=V_m^\flat$. Moreover, these contact forms are $\Ss^1$-invariant with respect to the canonical $\Ss^1$-action on $\Ss^3$ whose infinitesimal generator is the Hopf field $R$, that is:
$$
\mathcal{L}_R \alpha_m =0, \qquad m\in\{2,3,4,\dots\}\,.
$$
The characteristic surface $\Gamma_R$ of the contact structure defined by the kernel of $\alpha_m$ is defined~\cite{gir} as
$$\Gamma_R:=\{p \in \Ss^3 : R \ \text{tangent to the contact distribution} \ \ker  \alpha_m \ \text{at} \  p\}\,.$$
In our case, it can be described using the roots of a polynomial equation in the variable $z=\cos^2 s$ writing the condition $R\cdot V_m=0$ (using the round metric) at a point $p=(\cos s \, e^{\ii\phi_1}, \sin s \, e^{\ii\phi_2})\in \Ss^3$, that is
\begin{equation}\label{charactsurf}
F_m(z)+(2z-1)G_m(z)=0\,.
\end{equation}
The zeros of this equation are given by different values of $z\in(0,1)$ (it is immediate to check that $z=0$ and $z=1$ are not solutions), and hence of the Hopf radius $s$, so they correspond to toroidal surfaces on $\mathbb S^3$. By Giroux characterization of tight $\Ss^1$-invariant contact structures~\cite[Proposition 4.1b]{gir}, the contact structure $\alpha_m$ on $\mathbb S^3$ is tight if and only $\pi(\Gamma_R) =\emptyset$, where $\pi:\Ss^3 \to \Ss^2$ is the Hopf fibration, seen as a circle bundle of Euler number $-1$, whose fibres are tangent to $R$.

Lemma~\ref{L:zeros} below shows that Equation~\eqref{charactsurf} has at least one solution, which corresponds to a torus $s=s_0$, $s_0\in(0,1)$, which is a component of the characteristic surface $\Gamma_R$. Obviously, this torus projects onto a circle in $\Ss^2$ through the Hopf fibration, so $\pi(\Gamma_R) \neq \emptyset$, thus proving that $\alpha_m$ corresponds to an overtwisted contact structure on $\Ss^3$, for any $m\in\{2,3,4,\dots\}$, as we wanted to prove.
\end{proof}

\begin{lm}\label{L:zeros}
Equation~\eqref{charactsurf} has at least one solution $z\in (0,1)$.
\end{lm}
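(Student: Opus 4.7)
The plan is to show that the left-hand side of Equation~\eqref{charactsurf} can be explicitly identified, up to a positive multiplicative constant, with a Legendre polynomial evaluated at $1-2z$, whose zeros are classically known to lie in $(-1,1)$. Once this reduction is made, the lemma becomes a one-line consequence of the theory of orthogonal polynomials.

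To carry this out, I would first rewrite $F_m$ and $G_m$ from~\eqref{jac} in terms of derivatives of Legendre polynomials $P_n=P_n^{(0,0)}$. The classical differentiation formula for Jacobi polynomials gives $P_{n-1}^{(1,1)}(u)=\tfrac{2}{n+1}P_n'(u)$; combined with~\eqref{jac} and the substitution $u=1-2z$, this yields
\[
F_m(z)=\tfrac{2}{m(m+1)}\,P_m'(1-2z),\qquad G_m(z)=\tfrac{2}{m(m+1)}\,P_{m-1}'(1-2z).
\]
Next, I would invoke the standard Legendre identity
\[
P_m'(u)-u\,P_{m-1}'(u)=m\,P_{m-1}(u),
\]
which is a direct consequence of Bonnet's recursion. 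Substituting $u=1-2z$, the left-hand side of~\eqref{charactsurf}, which has the form $F_m(z)-u\,G_m(z)$, collapses to
\[
F_m(z)+(2z-1)\,G_m(z)=\tfrac{2}{m+1}\,P_{m-1}(1-2z).
\]

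Finally, I would conclude by the elementary fact that the Legendre polynomial $P_{m-1}$, being of positive degree $m-1\geq 1$, has all of its $m-1$ simple real zeros in $(-1,1)$; the bijection $z\mapsto 1-2z$ from $(0,1)$ to $(-1,1)$ then produces $m-1\geq 1$ solutions of Equation~\eqref{charactsurf} in $(0,1)$, proving the lemma. The main obstacle is guessing the right intermediate identity: a direct intermediate-value argument fails for odd $m$ because one can check that $h(0)=h(1)=\tfrac{2}{m+1}>0$, so some genuine algebraic cancellation must be exploited. The key insight is that $F_m$ and $G_m$ are proportional to derivatives of two \emph{consecutive} Legendre polynomials, which is precisely the combination that Bonnet's recursion collapses into a single Legendre polynomial.
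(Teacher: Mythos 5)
Your proof is correct, and it takes a genuinely different route from the one in the paper. The paper argues indirectly: it uses the fact that the zeros of the consecutive Jacobi polynomials $P_{m-1}^{(1,1)}$ and $P_{m-2}^{(1,1)}$ interlace, picks two consecutive zeros $z_1<z_2$ of $G_m$, observes that $\mathcal P_m(z_k)=F_m(z_k)$ there with opposite signs, and concludes by the intermediate value theorem; the cases $m=2,3,4$ (where $G_m$ has fewer than two zeros in the relevant range) are checked by hand. You instead produce a closed form: using $P_{n-1}^{(1,1)}(u)=\tfrac{2}{n+1}P_n'(u)$ together with the Bonnet-type identity $P_m'(u)-uP_{m-1}'(u)=mP_{m-1}(u)$, you show
\begin{equation*}
F_m(z)+(2z-1)G_m(z)=\tfrac{2}{m+1}\,P_{m-1}(1-2z)\,,
\end{equation*}
which I have verified against the explicit expressions $\mathcal P_2,\mathcal P_3,\mathcal P_4$ given in the paper's proof (all three match exactly). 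Your identity buys strictly more than the paper's argument: it shows that \eqref{charactsurf} has exactly $m-1$ solutions in $(0,1)$, all simple, so the characteristic surface $\Gamma_R$ consists of precisely $m-1$ tori, and it dispenses with the case analysis for small $m$. Your closing remark is also accurate: since $P_{m-1}(\pm 1)=(\pm 1)^{m-1}$, a naive sign change between $z=0$ and $z=1$ only occurs for even $m$, so some algebraic input (interlacing in the paper, the Legendre reduction in your argument) is genuinely needed.
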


\begin{proof}
Assume $m\geq 5$. The function $G_m$ is, by definition, a polynomial of degree $m-2\geq 3$. Since the zeros of the Jacobi polynomials lie in the interval $[-1,1]$, and are even or odd depending on the parity of $m$, it follows that $G_m$ has (at least) two consecutive zeros $0\leq z_1<z_2\leq 1$. Now, the polynomial $\mathcal{P}_m(z):=F_m(z)+(2z-1)G_m(z)$ evaluated at these points gives $\mathcal P_m(z_k)=F_m(z_k)$, $k=1,2$. Since the zeros of $G_m$ and $F_m$ interlace, we have that $F_m(z_1)F_m(z_2) < 0$, which is equivalent to $\mathcal{P}_m(z_1)\mathcal{P}_m(z_2) < 0$, thus showing that $\mathcal{P}_m$ has a zero between $z_1$ and $z_2$. The remaining cases are $m=2$ (when $\mathcal{P}_2(z)=\tfrac{2}{3}- \tfrac{4}{3}z$), $m=3$
(when $\mathcal{P}_3(z)=\tfrac{1}{2} -3z + 3z^2$), and $m=4$ (when $\mathcal P_4(z)=\tfrac{2}{5}-\tfrac{24}{5}z +12z^2-8z^3$), for which the statement is obviously true.
\end{proof}

\begin{ex}\label{concretexi}
The nonvanishing curl eigenfields $V_m$ constructed above can be expressed in Hopf coordinates as follows (for $m=2,3$):
\begin{equation*}
\begin{split}
V_2 &= -\tfrac{1}{3} (3 \cos 2 s - 1)\partial_{\phi_1} -  \tfrac{1}{3} (3 \cos 2 s  +  1)\partial_{\phi_2}\,, \\
V_3 &= (\tfrac{3}{2} - 6 \cos^2 s + 5 \cos^4 s)\partial_{\phi_1} +
 (\tfrac{1}{2} - 4 \cos^2 s + 5 \cos^4 s)\partial_{\phi_2}\,.
\end{split}
\end{equation*}
In particular, $R\cdot V_2=-\tfrac{2}{3}\cos 2s$ and $R\cdot V_3=\tfrac{1}{8} (3 \cos 4 s + 1)$ are the left hand sides of Equation~\eqref{charactsurf}, which describes the respective characteristic surfaces.
\end{ex}

To conclude the proof of Theorem~\ref{maint}, let us now show that the Hopf invariant of the overtwisted contact structures given by the 1-forms $\alpha_m$ dual to $V_m$, $m\geq 2$, is $-1$ or $0$, depending on whether the value of $m$ is even or odd, respectively. We recall that this Hopf invariant is given by the Hopf invariant of the map $\varphi_m:\mathbb S^3\to\mathbb S^2$ defined as follows. Write the vector field $V_m$ as $V_m = f R + f_1 X_1 + f_2 X_2$, with respect to the standard positively oriented orthonormal global frame $\{R, X_1, X_2 \}$ on $\Ss^3$. Then the map associated to (the contact structure defined by) $V_m$ is
$$\varphi_m:\Ss^3 \to \Ss^2\,, \qquad \varphi_m(p)=\frac{1}{\sqrt{f(p)^2 + f_1(p)^2 + f_2(p)^2}}\big(f(p), f_1(p), f_2(p)\big)\,.$$

\begin{re}\label{rem_duf}
Given a pair of nonvanishing vector fields $X$ and $V$ on $\Ss^3$, following~\cite{duf} one can define a relative homotopy invariant $H_X(V)\in \ZZ$, so that $X$ and $V$ are homotopic if and only if $H_X(V)=0$, cf.~\cite[Lemma 10]{duf}. It is easy to check that the Hopf invariant of $V$ as defined above coincides with $H_R(V)$ (after fixing the global frame $\{R, X_1, X_2 \}$ on $\Ss^3$).
\end{re}

Instrumental to the proof of Proposition~\ref{P:Hopf} below is the following lemma, which is standard, but we include a proof for the sake of completeness.

\begin{lm}\label{L:comp}
The Hopf invariants of the Hopf and anti-Hopf fields are $0$ and $-1$, respectively.
\end{lm}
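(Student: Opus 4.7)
For the Hopf field, in the global orthonormal frame $\{R, X_1, X_2\}$ the vector $R$ has components $(1,0,0)$, so $\varphi_R:\Ss^3\to\Ss^2$ is the constant map onto the north pole. A constant map is nullhomotopic, hence its Hopf invariant vanishes.

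For the anti-Hopf field, the plan is to compute the components of $R'$ in the frame $\{R,X_1,X_2\}$, identify the resulting Gauss map as a Hopf-type fibration of $\Ss^3$, and then compute the invariant via a linking number. Starting from $R' = \partial_{\phi_1}-\partial_{\phi_2} = -y_1\partial_{x_1}+x_1\partial_{y_1}+y_2\partial_{x_2}-x_2\partial_{y_2}$ and using \eqref{stdframe}, the Euclidean inner products yield
\[
R'\cdot R = \cos 2s,\quad R'\cdot X_1 = \sin 2s\,\sin(\phi_1+\phi_2),\quad R'\cdot X_2 = -\sin 2s\,\cos(\phi_1+\phi_2),
\]
and these square-sum to $1$ as a sanity check. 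Hence
\[
\varphi_{R'}(s,\phi_1,\phi_2) = \bigl(\cos 2s,\;\sin 2s\,\sin(\phi_1+\phi_2),\;-\sin 2s\,\cos(\phi_1+\phi_2)\bigr).
\]

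Next I would observe that the regular values of $\varphi_{R'}$ are exactly $\Ss^2\setminus\{(\pm 1,0,0)\}$, and that the preimage of such a regular value is a single orbit of the flow of $R'$ on a Clifford torus $\{s=s_0\}$; in the toral basis $(\partial_{\phi_1},\partial_{\phi_2})$ this orbit represents the homology class $(1,-1)$. Preimages of two distinct regular values on the central torus $\{s=\pi/4\}$ are thus disjoint parallel $(1,-1)$-curves, and by the standard formula $\mathrm{lk}=pq$ for the linking number of parallel $(p,q)$-torus curves in $\Ss^3$, their linking number equals $-1$. Since the Hopf invariant of $\varphi_{R'}$ coincides, up to a sign determined by orientation conventions, with this linking number, the claim follows.

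The principal obstacle is the bookkeeping of orientations: one must verify that the orientation of each preimage circle induced by $d\varphi_{R'}$ (pulled back from the standard orientation on $\Ss^2$) agrees with the orientation convention used in the linking-number formula, so that the sign really is $-1$ and not $+1$. As an independent check, one can compare $\varphi_{R'}$ with the standard Hopf map $h(s,\phi_1,\phi_2)=(\sin 2s\cos(\phi_1-\phi_2),\sin 2s\sin(\phi_1-\phi_2),\cos 2s)$: the explicit formulas display a factorization $\varphi_{R'} = \sigma\circ h\circ \tau$, where $\sigma$ is a cyclic permutation of coordinates on $\Ss^2$ (orientation-preserving) and $\tau$ is an isometry of $\Ss^3$ involving complex conjugation in $z_2$ (orientation-reversing), so that $H(\varphi_{R'})=-H(h)=-1$ given the classical value $H(h)=+1$.
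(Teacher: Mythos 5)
Your computation of the components of $R'$ in the frame $\{R,X_1,X_2\}$, and hence of the Gauss map $\varphi_{R'}$, agrees exactly with the paper's; the two arguments diverge only in how the invariant of $\varphi_{R'}$ is then extracted. The paper exhibits an explicit primitive of $\varphi_{R'}^*\Omega$, namely $\mathcal A=-2\eta'$ up to a closed form, and evaluates Whitehead's integral $\frac{1}{16\pi^2}\int_{\Ss^3}\mathcal A\wedge\dif\mathcal A=-1$ directly against its stated volume form $-\sin s\cos s\,\dif s\wedge\dif\phi_1\wedge\dif\phi_2$; this keeps all orientation conventions internal to the computation. You instead read off the invariant as the linking number of two regular fibres, which are parallel $(1,-1)$-curves on a Clifford torus, and you correctly identify the one genuine delicacy: the sign of $\mathrm{lk}=pq$ and the induced orientations on the fibres both depend on the orientation of $\Ss^3$, which here is fixed by declaring $\{R,X_1,X_2\}$ positive (equivalently $\eta\wedge\dif\eta>0$), not by fiat as the ``standard'' one. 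Your fallback factorization $\varphi_{R'}=\sigma\circ h\circ\tau$ does close this gap cleanly --- $\tau:(z_1,z_2)\mapsto(z_1,\bar z_2)$ reverses orientation, postcomposition by a diffeomorphism of $\Ss^2$ never changes the Hopf invariant (so it is irrelevant whether $\sigma$, which is the rotation $(a,b,c)\mapsto(c,b,-a)$ rather than a cyclic permutation, preserves orientation), hence $H(\varphi_{R'})=-H(h)$ --- \emph{provided} you verify that the contact orientation used in the paper coincides with the boundary orientation of the unit ball in $\CC^2$ for which $H(h)=+1$; this holds because the standard contact form $\eta=x_1\dif y_1-y_1\dif x_1+x_2\dif y_2-y_2\dif x_2$ is positive for that boundary orientation. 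With that one sentence added, your argument is complete; as written, the linking-number route leaves the orientation check as an acknowledged to-do, which is precisely the step the paper's integral computation performs explicitly.
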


\begin{proof}
For the Hopf field $R$ the claim is obvious (the associated map $\varphi_R$ is a constant). The anti-Hopf field reads in terms of the global frame $\{R,X_1,X_2\}$ as:
$$R^\prime = \cos 2s \, R + \sin 2s \sin(\phi_1 + \phi_2) X_1 - \sin 2s \cos(\phi_1 + \phi_2) X_2\,.$$
Then the associated map is
$$\varphi_{R^\prime}(\cos s \, e^{\ii\phi_1}, \sin s \, e^{\ii\phi_2})=\big(\cos 2s, \ \sin 2s \, e^{\ii (\phi_1 + \phi_2 - \frac{\pi}{2})}\big)\,.$$
The orientation on $\Ss^3$ is fixed by the requirement that $\{R, X_1, X_2\}$ is a positive orthonormal frame (this is the usual convention in Contact Topology for which $\eta\wedge \dif \eta>0$), so the volume form in Hopf coordinates reads as $-\sin s \cos s \dif s \wedge \dif \phi_1 \wedge \dif \phi_2$. The area form on $\Ss^2$ is $\Omega = x dy \wedge dz + y dz \wedge dx + z dx \wedge dy$, so it is easy to check that $\varphi_{R^\prime}^* \Omega = -2 \dif(\cos^2 s \dif \phi_1 -\sin^2 s \dif \phi_2) =:\dif \mathcal{A}$ (so $\mathcal A=-2\eta^\prime$ up to a closed $1$-form). A straightforward application of Whitehead's formula gives
$$\text{Hopf invariant}(\varphi)=\frac{1}{16\pi^2}\int_{\Ss^3} \mathcal{A} \wedge \dif \mathcal{A} =-1\,,$$
thus showing that the Hopf invariant of (the contact structure associated to)  $R^\prime$ is $-1$.
\end{proof}

\begin{pr}\label{P:Hopf}
The homotopy classes of the contact structures associated to $V_m$ have Hopf invariant (or Dufraine's invariant $H_R(V_m)$, cf. Remark~\ref{rem_duf})
$$
\text{Hopf invariant}=H_R(V_m)=\frac{(-1)^{m+1}-1}{2}\,.
$$
\end{pr}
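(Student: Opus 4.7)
The strategy is to exploit the $\mathbb S^1$-symmetry $\mathcal{L}_R V_m = 0$ in order to factor the Gauss map $\varphi_m: \mathbb S^3 \to \mathbb S^2$ through an anti-Hopf circle bundle, and then compute the Hopf invariant via the standard multiplicativity formula for compositions $\mathbb S^3 \to \mathbb S^2 \to \mathbb S^2$. I would first write $V_m = F_m(z) R + G_m(z) R'$ in the orthonormal frame $\{R, X_1, X_2\}$ using the decomposition $R' = \cos 2s\, R + \sin 2s\sin\psi\, X_1 - \sin 2s\cos\psi\, X_2$ recorded in the proof of Lemma~\ref{L:comp}, where $\psi := \phi_1+\phi_2$. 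The key observation is that the resulting components of $V_m$ depend on $\phi_1$ and $\phi_2$ only through $\psi$, so $\varphi_m$ is constant along the integral curves of $R' = \partial_{\phi_1} - \partial_{\phi_2}$, i.e., along the fibers of the anti-Hopf fibration
\[
\pi': \mathbb S^3 \to \mathbb S^2, \qquad \pi'(s,\phi_1,\phi_2) := (\sin 2s\cos\psi,\sin 2s\sin\psi,\cos 2s).
\]
Consequently there is a smooth map $\tilde F_m: \mathbb S^2 \to \mathbb S^2$ with $\varphi_m = \tilde F_m\circ \pi'$, which in spherical coordinates $(\tau,\psi)$ on the source ($\tau = 2s$) and $(\theta,\psi)$ on the target takes the rotationally symmetric form $(\tau,\psi)\mapsto(\theta(\tau/2),\psi)$, with $\cos\theta(s) := (F_m + G_m\cos 2s)/|V_m|$.

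By the multiplicativity of the Hopf invariant, $H(\varphi_m) = \deg(\tilde F_m)^2 \cdot H(\pi')$. Lemma~\ref{L:comp} gives $H(\pi') = -1$, since the Gauss map $\varphi_{R'}$ treated there equals $\rho\circ\pi'$ for the orientation-preserving rotation $\rho(x,y,z) = (z,y,-x)$ of $\mathbb S^2$, so the same multiplicativity forces $H(\pi') = H(\varphi_{R'}) = -1$. Because $\tilde F_m$ preserves the azimuth $\psi$, integrating its pullback of the area form $\sin\theta\, d\theta\wedge d\psi$ reduces to the telescoping expression
\[
\deg(\tilde F_m) = \tfrac{1}{2}\bigl(\cos\theta(0) - \cos\theta(\pi/2)\bigr).
\]
The endpoint values follow directly from the formulas $V_m|_{s=0} = \tfrac{2(-1)^{m+1}}{m+1} R|_{s=0}$ and $V_m|_{s=\pi/2} = \tfrac{2}{m+1} R|_{s=\pi/2}$ computed earlier in the section: one gets $\cos\theta(0) = (-1)^{m+1}$ and $\cos\theta(\pi/2) = 1$, so $\deg(\tilde F_m) = \bigl((-1)^{m+1}-1\bigr)/2 \in \{0,-1\}$. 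Since $-d^2 = d$ for $d \in \{0,-1\}$, this yields $H(\varphi_m) = -\deg(\tilde F_m)^2 = \frac{(-1)^{m+1}-1}{2}$, as claimed.

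The conceptual core of the argument is the factorization through $\pi'$, which collapses the 3-dimensional Hopf-invariant computation to the degree of a rotationally symmetric sphere map controlled by the single angle function $\theta(s)$. The main obstacle will be keeping the orientation and normalization conventions consistent across the frame $\{R, X_1, X_2\}$, the fibration $\pi'$, and the rotation $\rho$, so that the multiplicativity formula yields the correct sign and the boundary values of $\theta(s)$ reassemble into precisely the claimed indicator $\frac{(-1)^{m+1}-1}{2}$.
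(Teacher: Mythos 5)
Your argument is correct, but it follows a genuinely different route from the paper's. The paper proves Proposition~\ref{P:Hopf} by exhibiting an explicit homotopy $V^t_m$ through nonvanishing vector fields joining $V_m$ to the Hopf field $R$ ($m$ odd) or the anti-Hopf field $R^\prime$ ($m$ even): nonvanishing away from the Hopf link follows from the interlacing of zeros of consecutive Jacobi polynomials, and on the Hopf link from Tur\'an's inequality~\eqref{turan}; the invariant is then read off from Lemma~\ref{L:comp}. You instead exploit the fact that the components of $V_m=F_mR+G_mR^\prime$ in the frame $\{R,X_1,X_2\}$ depend only on $s$ and $\phi_1+\phi_2$, so the Gauss map is constant on the fibres of the \emph{anti}-Hopf fibration and factors as $\varphi_m=\tilde F_m\circ\pi'$; then $H(\varphi_m)=\deg(\tilde F_m)^2H(\pi')$ with $H(\pi')=H(\varphi_{R^\prime})=-1$ by Lemma~\ref{L:comp}, and the degree is the telescoping quantity $\tfrac12\big(u(0)-u(\pi/2)\big)$ for $u:=\langle V_m,R\rangle/\abs{V_m}$, determined by the boundary values of $V_m$ on the Hopf link alone. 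I checked the factorization, the identity $\varphi_{R^\prime}=\rho\circ\pi'$ with $\rho$ a rotation, and the endpoint values; all are correct. Your approach buys several things: it avoids Tur\'an's inequality altogether, it explains structurally why the invariant of any nonvanishing field of the form~\eqref{kkps} must be $-d^2\in\{0,-1\}$ (a point the paper's homotopy does not make transparent), and it is precisely the reduction that renders the Whitehead integral of the subsequent Remark computable; the paper's approach, in exchange, produces an explicit nonvanishing homotopy to a model field, which is of independent geometric interest. Two details to tighten in a final write-up: since $G_m$ changes sign on $(0,1)$, the quantity $G_m\sin 2s/\abs{V_m}$ is not a nonnegative $\sin\theta$, so either take $\theta$ as a continuous real-valued lift or compute $\deg(\tilde F_m)=\frac{1}{4\pi}\int\tilde F_m^*\Omega$ directly from $\tilde F_m^*\Omega=-\dif u\wedge\dif\psi$ (the endpoint formula survives either way); and note that the conclusion is insensitive to every orientation convention you worried about, because $-\deg^2$ depends only on $\abs{\deg}\in\{0,1\}$, which is decided by whether $u(0)$ and $u(\pi/2)$ agree.
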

\begin{proof}
For each integer $m\geq 2$, we consider the following smooth family of vector fields on $\Ss^3$, indexed by $t\in [0,1]$:
\begin{equation*}
V^{t}_{m} = \frac{1}{m^2}P_{m-1}^{(1,1)}(1-t)P_{m-1}^{(1,1)}((t-1)\cos 2s)R +
\frac{1}{(m+1)^2}P_{m}^{(1,1)}(1-t)P_{m-2}^{(1,1)}((t-1)\cos 2s)R^\prime\,.
\end{equation*}
We claim that
\begin{itemize}
\item[(i)] $V^{t}_{m}$ does not vanish on $\Ss^3$, for any integer $m\geq 2$ and any $t\in [0,1]$.

\item[(ii)] $V^{0}_{m} = V_m$.

\item[(iii)] Up to a multiplicative constant, $V^{1}_{m}$ is equal to the Hopf field $R$ if $m$ is odd, and to the anti-Hopf field $R^\prime$ if $m$ is even.
\end{itemize}
From these claims we deduce that $V_m$ is homotopic (through nonvanishing vector fields) to either $R$ or $R^\prime$, depending on whether $m$ is odd or even, respectively, so the associated map $\varphi_m$ has Hopf invariant $0$ when $m$ is odd and $-1$ when $m$ is even (cf. Lemma~\ref{L:comp}), and the proposition follows.

In order to prove the claim (i), let us recall that the vector fields $R$ and $R^\prime$ are linearly independent at any point of $\mathbb S^3$ except on the Hopf link, where we have that $R=R^\prime$ if $s=0$ and $R=-R^\prime$ if $s=\frac{\pi}{2}$. Accordingly, since consecutive Jacobi polynomials do not vanish at the same point, we have that away from the Hopf link, the vector field $V^{t}_{m}$ is clearly nonvanishing for all $m$ and $t\in(0,1)$, and on the Hopf link we have
\begin{equation*}
\begin{split}
V^{t}_{m} &= (-1)^{m-1}\left(\frac{1}{m^2}[P_{m-1}^{(1,1)}(1-t)]^2 -\frac{1}{(m+1)^2}P_{m}^{(1,1)}(1-t)P_{m-2}^{(1,1)}(1-t)\right)R\,, \quad \text{at} \ s=0\,,\\
V^{t}_{m} &=\left(\frac{1}{m^2}[P_{m-1}^{(1,1)}(1-t)]^2 -\frac{1}{(m+1)^2}P_{m}^{(1,1)}(1-t)P_{m-2}^{(1,1)}(1-t)\right)R\,, \quad \text{at} \ s=\tfrac{\pi}{2}\,.
\end{split}
\end{equation*}
Let us now show that the field $V^{t}_{m}$ on $s=0$ and $s=\pi/2$ does not vanish for all $t\in(0,1)$. Indeed, using Tur\'an's inequality~\cite{sz}, in terms of Jacobi polynomials,
\begin{equation} \label{turan}
[P_{m-1}^{(1,1)}(w)]^2 > \frac{m^2}{m^2-1}P_{m}^{(1,1)}(w)P_{m-2}^{(1,1)}(w)>P_{m}^{(1,1)}(w)P_{m-2}^{(1,1)}(w)\,, \qquad \abs{w} < 1\,,
\end{equation}
it is immediate to check that
$$\frac{1}{m^2}[P_{m-1}^{(1,1)}(1-t)]^2 > \frac{1}{(m+1)^2}P_{m}^{(1,1)}(1-t)P_{m-2}^{(1,1)}(1-t)\,,$$
for all $t\in(0,1)$, and hence $V^t_m$ does not vanish on the Hopf link.

Claim (ii) is an immediate consequence of the identity~\cite{specfunc}
$$P_{m}^{{(1, 1)}}(1)=m+1\,,$$
and that $1-2\cos^2s=-\cos 2s$.

Finally, to prove the claim (iii), we first write
$$V^1_{m} = \frac{1}{m^2}[P_{m-1}^{(1,1)}(0)]^2 R +
\frac{1}{(m+1)^2}P_{m}^{(1,1)}(0)P_{m-2}^{(1,1)}(0)R^\prime\,,$$
and observe that $P_{m}^{{(1, 1)}}(0) \neq 0$ if and only if $m$ is even, since for Jacobi polynomials it is well known~\cite{specfunc} that they have a definite parity, i.e., $P_{m}^{{(\alpha ,\beta )}}(w) =(-1)^m P_{m}^{{(\beta ,\alpha )}}(-w)$, and consecutive Jacobi polynomials have different zeros.
\end{proof}

\begin{re}
An alternative way of computing the homotopy classes of the contact structures is via the associated map $\varphi_m$ between spheres. Writing the curl eigenfield $V_m$ in terms of the (positively oriented) orthonormal global frame $\{R,X_1,X_2\}$, i.e. $V_m = f R + f_1 X_1 + f_2 X_2$, the map $\varphi_m:\mathbb S^3\to \mathbb S^2$ is of the following form:
\begin{equation*}
\begin{split}
\varphi_m(\cos s \, e^{\ii\phi_1}, \ \sin s \, e^{\ii\phi_2})=\left(\frac{F_m+\cos 2s \, G_m}{H_m}, \ \frac{\sin 2s \, G_m}{H_m} \, e^{\ii(\phi_1+\phi_2 - \frac{\pi}{2})}\right),
\end{split}
\end{equation*}
where $H_m:= \sqrt{F_m^2+2\cos 2s \, F_mG_m+G_m^2}$, the functions $F_m$ and $G_m$ evaluated at $\cos^2 s$. For example, for $m=2$ we obtain
$$\varphi_m(\cos s \, e^{\ii\phi_1}, \ \sin s \, e^{\ii\phi_2})=\left(-\frac{2 \sqrt{2} \cos 2s}{\sqrt{5 + 3 \cos 4s}},\ \frac{\sqrt{2} \sin 2s}{\sqrt{5 + 3 \cos 4s}} e^{\ii(\phi_1+\phi_2 - \frac{\pi}{2})}\right)\,,$$
and the Hopf invariant can be computed to be $-1$ using Whitehead's integral formula. In principle, the same procedure can be followed for higher eigenvalues, however the formulas become more and more cumbersome (which makes impossible in practice to evaluate the corresponding Whitehead's integral).
\end{re}

\begin{re} Using Tur\'an's inequality~\eqref{turan} it is possible to show that, for consecutive even eigenvalues, $\lambda = 2m$ and $\lambda=2m+2$, $m\geq 2$, the curl eigenfields $V_m$ found above are collinear only at $\{s=\pi/2\}$ ($C_+$ is one of the components of the Hopf link) and anti-collinear only at $\{s=0\}$ ($C_-$ is the other component of the Hopf link). Indeed the proportionality (for $s \in (0,\pi/2)$) between the curl eigenfields for $\lambda = 2m$ and $\lambda=2m+2$, with coefficients $F_m, G_m$ and $F_{m+1}, G_{m+1}$, respectively, translates into the requirement that
\begin{align}
&\frac{m(m + 1)}{(m - 1)(m+2)}\big(F_{m}(z)\big)^2=F_{m-1}(z)F_{m+1}(z)\\ &\Leftrightarrow [P_{m-1}^{(1,1)}(1-2z)]^2 = \frac{m(m+2)}{(m+1)^2}P_{m}^{(1,1)}(1-2z)P_{m-2}^{(1,1)}(1-2z)
\end{align}
should hold for some $z\in (0,1)$, which contradicts~\eqref{turan} (because $\frac{m(m+2)}{(m+1)^2}<1$ for all $m\geq 2$). Moreover, a straightforward computation shows that on $s=\pi/2$ the fields are
\begin{equation*}
V_m=\frac{2}{m+1}R\,, \qquad V_{m+1}=\frac{2}{m+2}R\,,
\end{equation*}
and hence they are collinear, while on $s=0$ they are given by
\begin{equation*}
V_m=\frac{-2(-1)^m}{m+1}R\,, \qquad V_{m+1}=\frac{2(-1)^m}{m+2}R\,,
\end{equation*}
so they are anti-collinear.

Since Ref.~\cite[Lemma 23]{duf} implies that the linking number of the link formed by the collinear and anti-collinear curves $C_+,C_-$ of the vector fields $V_m$ and $V_{m+1}$ gives the distance in homotopy classes of both fields, we conclude that these homotopy classes are adjacent (because $|\mathrm{link}(C_+, C_-)|=1$), which is consistent with Proposition~\ref{P:Hopf} (although it does not compute exactly the homotopy class).
\end{re}

\begin{re}[Negative eigenvalues]\label{R:negative}
For the case of negative eigenvalues $\lambda\in\{-2,-3,-4,\dots\}$, Lemma~\ref{L:odd} also holds (i.e. if $|\lambda|$ is odd, any curl eigenfield vanishes at some point). The construction presented in Equation~\eqref{kkps} for $\lambda=-2m$, gives a curl eigenfield $V_m^\prime$ of the form
$$
V_m^\prime=G_mR+F_mR^\prime\,,
$$
where $F_m$ and $G_m$ are the same functions obtained in Equation~\eqref{jac}. Arguing now exactly as in the proofs of Propositions~\ref{P:OT} and~\ref{P:Hopf}, we conclude that the corresponding contact structures are overtwisted with Hopf class equal to $-1$ or $0$ depending on whether $m$ is odd or even, respectively. An alternative way of proceeding is noticing that the Hopf field $R$ and the anti-Hopf field $R^\prime$ are related by the orientation-reversing diffeomorphism $(x_1,y_1,x_2,y_2)\to (x_1,y_1,x_2,-y_2)$ restricted to $\mathbb S^3$.
\end{re}

\section{Beltrami fields with constant norm on $\Ss^3$: proof of Proposition~\ref{maint2}}\label{S:const}

In this section we provide a completely different proof of~\cite[Theorem A]{glu} (in the special case of curl eigenfields) claiming that the only contact forms on $\mathbb S^3$ whose compatible metric is the round one are those whose Reeb field is isometric (up to a constant proportionality factor) to the Hopf field $R$ (or the anti-Hopf field $R^\prime$ in the case we consider negative contact forms), cf. Proposition~\ref{maint2}. All along this section we shall use the notation introduced in Section~\ref{S:sphere} without further mention.

Let us first show that a curl eigenfield on $\mathbb S^3$ with eigenvalue $\lambda>2$ cannot have constant norm. Suppose the contrary, so let $V$ be a constant norm vector field, which can be rescaled so that $\abs{V}=1$, with $\cu V=\lambda V$. Lemma~\ref{L:odd} implies that $\lambda$ is an even number $\geq 4$. With respect to the standard orthonormal global frame, it decomposes as $V = f R + f_1 X_1 + f_2 X_2$, so the associated map $\varphi_V: \Ss^3 \to \Ss^2$ is given by
$$p\in \mathbb S^3 \mapsto (f, f_1, f_2)(p)\,.$$

As $f, f_1$ and $f_2$ are eigenfunctions of the Laplacian on $\Ss^3$ with eigenvalue $\lambda(\lambda -2)\geq 8$ (cf. Proposition~\ref{P:comps}), it follows that $\varphi_V$ is an eigenmap~\cite[Example 3.3.18]{BW}, i.e., it is a harmonic map with constant energy density $\abs{\dif \varphi_V}^2=\lambda(\lambda - 2)$. Moreover, for a Beltrami field on $\Ss^3$ we have
$$
V\cdot\nabla f=V\cdot \nabla(V\cdot R)=\nabla_VV\cdot R+\nabla_VR\cdot V=\frac12R\cdot\nabla(\abs{V}^2)=0\,,
$$
where in the third equality we have used the well known identity for Beltrami fields $\nabla_V V=\frac12 \nabla(\abs{V}^2)$, and that $R$ is a Killing vector field and hence $V\cdot \nabla_VR=0$. Arguing in the same way we obtain
$$V\cdot \nabla f_1=\frac{1}{2}X_1\cdot \nabla(\abs{V}^2)=0\,, \qquad V\cdot \nabla f_2=\frac{1}{2}X_2\cdot \nabla(\abs{V}^2)=0\,.$$
The components $f, f_1, f_2$ are then first integrals of $V$, and therefore
$$V \in \ker \dif \varphi_V\,.$$
Furthermore, $\varphi_V$ has geodesic fibres because $\nabla_V V=0$ (by the aforementioned identity).

For any harmonic map $\varphi:M\to N$ between two Riemannian manifolds we have the Weitzenb\"ock formula~\cite{ES}, in terms of a local orthonormal basis $\{e_i\}_{i=1}^{\text{dim }M}$:
\begin{equation}
\tfrac{1}{2}\Delta \abs{\dif \varphi}^2= \abs{\nabla \dif \varphi}^2 +\sum_i \langle \dif \varphi(\Ric^M e_i), \dif \varphi(e_i) \rangle -\sum_{i,j}\langle R^N (\dif \varphi(e_i), \dif \varphi(e_j))\dif \varphi(e_j), \ \dif \varphi(e_i) \rangle\,,
\end{equation}
which reads, in our case $M=\Ss^3$ and $N=\Ss^2$, as:
$$\tfrac{1}{2}\Delta \abs{\dif \varphi}^2 = \abs{\nabla \dif \varphi}^2 + 2\abs{\dif \varphi}^2 -2 \abs{\Lambda^2\dif \varphi}^2\,.$$
Here $\Lambda^2 \dif\varphi = \dif\varphi \wedge \dif\varphi$ is the extension of the exterior derivative to $2$-vectors. Its norm is the norm of the pullback (with $\varphi$) of the area form on $\mathbb S^2$.

Applying this to the map $\varphi_V$ with $\abs{\dif \varphi_V}^2=\lambda(\lambda - 2)> 0$ yields
$$0= \abs{\nabla \dif \varphi_V}^2 + 2\lambda(\lambda - 2) -2 \abs{\Lambda^2\dif \varphi_V}^2\,,$$
and therefore
$$
2 \abs{\Lambda^2\dif \varphi_V}^2\geq 2\lambda(\lambda - 2)>0\,,
$$
which implies that $\dif \varphi_V$ must have constant rank 2 (recall that the points where $\dif\varphi_V$ has rank smaller than $2$ correspond to zeros of the pullback of the area form on $\mathbb S^2$, and hence to zeros of $\abs{\Lambda^2\dif \varphi_V}^2$). This allows us to apply~\cite[Corollary 4]{baird} to deduce that $\varphi_V$ is a harmonic morphism and~\cite[Theorem 1]{eells} to deduce that $\varphi_V$ is isometrically equivalent to the Hopf fibration. Since $V \in \ker \dif \varphi_V$, and $\abs{V}=1$, this easily implies that $V$ is isometric to the Hopf field. Since isometries preserve curl eigenspaces, we would have that $\lambda=2$, which contradicts our assumption that $\lambda>2$.

To finish the proof of Proposition~\ref{maint2}, we observe that any curl eigenfield on $\Ss^3$ with $\lambda=2$ must have constant norm (as it is a linear combination with constant coefficients of the three vector fields $R, X_1, X_2$ giving the standard frame). Moreover, normalizing so that $\abs{V}=1$, and noticing that the action of the isometry group of $\mathbb S^3$ on the Hopf field $R$ gives a $2$-dimensional orbit diffeomorphic to $\mathbb S^2$ (see e.g.~\cite[Lemma~1]{Wi96}), we conclude that any eigenfield with eigenvalue $2$ is isometric to the Hopf field.

\begin{re}
An analogous proof holds for the case of negative eigenvalues $\lambda\leq -2$. In this case the eigenvalue must be $\lambda=-2$ and the curl eigenfield is isometric to the anti-Hopf field $R^\prime$.
\end{re}

\section{Nonvanishing Beltrami fields on $\Tt^3$: proof of Theorem~\ref{maint3}}\label{S:torus}

In this section we consider the torus $\Tt^3 = \RR^3/ (2\pi \ZZ)^3$ endowed with the flat metric. Recall that the spectrum of the curl operator is given by
$$\{\lambda=\pm\abs{k}:k \in \ZZ^3\}\,.$$

It is easy to check that for any nonzero vector $b \in \RR^3$, $b \perp k$, the vector field
\begin{equation}\label{genericbelt3}
V_k=\cos(k \cdot x)\, b + \frac{1}{\abs{k}}\sin(k \cdot x)\, b \times k
\end{equation}
is an eigenfield of $\cu$ with eigenvalue $\abs{k}$. A straightforward computation shows that $\abs{V_k}= \abs{b}$ (constant norm), so $V_k$ is a nonvanishing curl eigenfield, and hence it induces a contact structure on $\Tt^3$. We first show that all these contact structures are tight, and the associated map $\varphi_k$ is homotopically trivial. We recall that the map $\varphi_k:\mathbb T^3\to \mathbb S^2$ is defined as
$$
\varphi_k(x)=\frac{1}{\sqrt{A(x)^2 + B(x)^2 + C(x)^2}}\big(A(x), B(x), C(x)\big)\,,
$$
where $V_k=A\partial_{x_1}+B\partial_{x_2}+C\partial_{x_3}$ in the standard coordinates $x=(x_1,x_2,x_3)\in \RR^3/ (2\pi \ZZ)^3$.

\begin{lm}\label{L:torus}
The map $\varphi_k$ is homotopically trivial (in fact, it is a harmonic map with constant energy density). Moreover, for all $k\in\mathbb Z^3$, the contact structures associated with the curl eigenfields $V_k$ are tight.
\end{lm}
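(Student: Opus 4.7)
The hypothesis $b\perp k$ gives $b\cdot(b\times k)=0$ and $|b\times k|=|b||k|$, so a direct expansion yields $|V_k|\equiv|b|$. The Gauss map is then
\[
\varphi_k(x)=\cos(k\cdot x)\,\hat b+\sin(k\cdot x)\,\widehat{b\times k},
\]
with $\hat b,\widehat{b\times k}$ orthonormal vectors in $\RR^3$. Its image lies on the great circle they span, so $\varphi_k$ factors as $\Tt^3\to\Ss^1\hookrightarrow\Ss^2$ via $x\mapsto k\cdot x\bmod 2\pi$, and the inclusion is null-homotopic since $\pi_1(\Ss^2)=0$. Differentiating gives $\partial_i\varphi_k=k_i(-\sin(k\cdot x)\hat b+\cos(k\cdot x)\widehat{b\times k})$, whence $|\dif\varphi_k|^2=|k|^2$ is constant and $\Delta\varphi_k=-|k|^2\varphi_k$ is normal to $T_{\varphi_k}\Ss^2$; the tension field vanishes and $\varphi_k$ is a harmonic eigenmap.

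\textbf{Tightness --- the main obstacle.} My plan is to reduce $\alpha_k:=V_k^\flat$ to a standard tight model on $\Tt^3$ by an $SL(3,\ZZ)$-diffeomorphism, followed by Gray stability. Write $k=mk_0$ with $k_0\in\ZZ^3$ primitive and $m\geq 1$, choose $A\in SL(3,\ZZ)$ with $Ak_0=e_3$, and set $\psi(x):=A^T x$, a diffeomorphism of $\Tt^3$. A direct pullback gives
\[
\psi^*\alpha_k=\cos(mx_3)\,\beta_1+\sin(mx_3)\,\beta_2,
\]
with $\beta_1=(Ab)\cdot\dif x$ and $\beta_2=|k|^{-1}A(b\times k)\cdot\dif x$ constant $1$-forms. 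Using $\det A=1$, $A^{-1}e_3=k_0$ and the identity $(b\times k)\times k_0=-|k||k_0|b$, one checks
\[
\beta_1\wedge\beta_2\wedge\dif x_3=-|b|^2|k_0|\,\dif x_1\wedge\dif x_2\wedge\dif x_3\neq 0.
\]

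\textbf{Contact homotopy and conclusion.} I would then pick a smooth path $(\gamma_1(t),\gamma_2(t))$ of constant $1$-forms from $(\beta_1,\beta_2)$ to $(\dif x_1,-\dif x_2)$ (matching the sign above) with $\dif x_3\wedge\gamma_1(t)\wedge\gamma_2(t)\neq 0$ throughout; such a path exists because the space of pairs of constant $1$-forms with fixed orientation relative to $\dif x_3$ is homotopy equivalent to $GL_2^+(\RR)$ and hence path-connected. The same bookkeeping that produced the formula above (the only nonvanishing terms being $\cos^2\gamma_1\wedge \dif x_3\wedge\gamma_2$ and $-\sin^2\gamma_2\wedge \dif x_3\wedge\gamma_1$) gives
\[
\alpha_t\wedge\dif\alpha_t=-m\,\dif x_3\wedge\gamma_1(t)\wedge\gamma_2(t)\neq 0,
\]
so $\alpha_t:=\cos(mx_3)\gamma_1(t)+\sin(mx_3)\gamma_2(t)$ is a contact homotopy. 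By Gray stability, $\psi^*\alpha_k$ is contact isotopic to $\cos(mx_3)\dif x_1-\sin(mx_3)\dif x_2$, which the diffeomorphism $(x_1,x_2,x_3)\mapsto(x_1,-x_2,x_3)$ of $\Tt^3$ takes to the classical tight form $\cos(mx_3)\dif x_1+\sin(mx_3)\dif x_2$; tightness is preserved under all these operations, so $\xi_k$ is tight. Finally, every tight contactomorphism class on $\Tt^3$ is realized by taking $k=(0,0,n)$, $b=(1,0,0)$ for $n\geq 1$.
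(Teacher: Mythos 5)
Your proof of the first part (image of $\varphi_k$ contained in a great circle, hence null-homotopic; components are $\Lambda=|k|^2$ Laplace eigenfunctions, hence an eigenmap) is essentially the paper's argument. For tightness, however, you take a genuinely different and in fact stronger route. The paper lifts $\eta_k$ to $\RR^3$ and applies an orthogonal transformation (sending the frame $\{b\times k/|k|,\,b,\,k/|k|\}$ to the standard one) followed by the rescaling $x_3\mapsto |k|x_3$; neither map descends to $\Tt^3$ in general, but that is irrelevant because the conclusion is that the \emph{lift} is contactomorphic to the standard tight structure on $\RR^3$, i.e.\ universal tightness, which implies tightness. You instead stay on $\Tt^3$: factoring $k=mk_0$ with $k_0$ primitive, conjugating by an $SL(3,\ZZ)$ diffeomorphism, and then running a linear contact homotopy through pairs of constant $1$-forms (your computation $\alpha_t\wedge\dif\alpha_t=-m\,\dif x_3\wedge\gamma_1(t)\wedge\gamma_2(t)$ is correct, and the connectedness of the relevant component of pairs is fine), you conclude via Gray stability that $\xi_k$ is contactomorphic to the standard model $\eta_m$, whose tightness you quote as known (Kanda/Giroux). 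What this buys is a precise identification of the contactomorphism class of $\xi_k$ --- namely $\eta_m$ with $m$ the divisibility of $k$ --- which goes beyond the lemma and in fact addresses the classification question the paper leaves open in the remark following this lemma (and is consistent with the worked example there for $k=(1,1,0)$, where $m=1$). The trade-off is that your argument leans on the tightness of the $\eta_m$ on $\Tt^3$ as an external input, whereas the paper only needs tightness of the standard structure on $\RR^3$; both are standard citations, so the proof is sound.
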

\begin{proof}
It is obvious that $k\cdot V_k=0$. It then follows that the image of the map $\varphi_k : \Tt^3 \to \Ss^2$ stays in the intersection of a plane with the $2$-sphere, and therefore $\varphi_k$ (and the contact structure induced by $V_k$) is homotopically trivial. Moreover, since the components of the vector field $V_k$ are eigenfunctions of the Laplacian with eigenvalue $\abs{k}^2$, $\varphi_k$ is an eigenmap~\cite[Example 3.3.18]{BW}, so harmonic with constant energy density.

Let us denote by $\eta_k$ the contact form dual to $V_k$. By definition, the flat metric is compatible with the contact structure $\eta_k$ on $\Tt^3$. Consider the obvious lift of $\eta_k$ to $\mathbb R^3$, which we will still denote by $\eta_k$, and assume without loss of generality that $\abs{b}=1$. Of course, the frame $\{ b\times k/|k|, b, k/|k|\}$ defines an orthonormal frame of $\RR^3$. Now take an orthogonal transformation of $\mathbb R^3$ that transforms this frame into the standard orthonormal frame $\{e_1,e_2,e_3\}$. After this transformation, the 1-form $\eta_k$ reads as:
$$
\cos(|k|x_3)\dif x_2 + \sin(|k|x_3) \dif x_1\,.
$$
Acting further with the diffeomorphism $(x_1,x_2,|k|x_3) \to (x_1,x_2,x_3)$ of $\mathbb R^3$, we obtain the $1$-form
$$\cos x_3 \dif x_2 + \sin x_3 \dif x_1\,,$$
which is contactomorphic to the tight standard contact structure in $\RR^3$ (\cite[Example 3.25]{geighand}). Summarizing, we have shown that the contact structure defined by $\eta_k$ lifted to $\mathbb R^3$ is contactomorphic to the tight standard contact structure, thus proving the universal tightness (and hence the tightness) of the contact structure $\eta_k$ in $\Tt^3$.
\end{proof}

The first part of Theorem~\ref{maint3} follows from Lemma~\ref{L:torus} and the following observation. In the particular case that $k=(0,0,m)$, $m\in\mathbb Z$, and $b=(0,1,0)$, we find the following standard family of contact structures on $\Tt^3$. We write it in terms of the $1$-forms dual to the vector fields $V_k$:
\begin{equation}\label{standardT3}
\eta_m = \sin(mx_3)\dif x_1 + \cos(mx_3)\dif x_2\,, \qquad m\in \ZZ\,,
\end{equation}
which corresponds to the integer part of the curl spectrum (i.e. $\ast \dif \eta_m = m \eta_m$). Of course, these contact forms $\eta_m$ are tight and homotopically trivial, but they belong to distinct contactomorphic classes (i.e., there is no contact diffeomorphism between $(\mathbb T^3,\zeta_n)$ and $(\mathbb T^3,\zeta_m)$ if $n\neq m$). In addition, any tight contact structure on $\Tt^3$ is contactomorphic to one of these $\eta_m$~\cite{kan}.

\begin{re}
We have not been able to classify the contactomorphic classes of the contact structures associated with all the curl eigenfields $V_k$ defined above. The following example settles a particular case with eigenvalue $\lambda=\sqrt{2}$. Consider the eigenfield $V_k$ with $k=(1,1,0)$ and $b=(0,0,1)$, actually its dual $1$-form $\eta$:
$$
\eta := -\frac{1}{\sqrt{2}} \sin(x_1 + x_2) \dif x_1 +
\frac{1}{\sqrt{2}} \sin(x_1 + x_2) \dif x_2 +
\cos(x_1 + x_2) \dif x_3\,.
$$
The pullback of $\eta$ by the $SL(3, \ZZ)$ diffeomorphism $\Psi(x_1,x_2,x_3)=(2x_1 - x_2, -x_1 + x_2, x_3)$ of $\mathbb T^3$ is
$$
\Psi^*\eta= \sqrt{2} \sin(x_1) \dif x_2 + \cos(x_1) \dif  x_3 - \frac{3}{\sqrt{2}}\sin(x_1) \dif x_1\,.
$$
Consider now the homotopy
$$
\alpha_t:=\sqrt{t+1} \sin(x_1) \dif x_2 + \cos(x_1) \dif  x_3 - \frac{3t}{\sqrt{2}}\sin(x_1) \dif x_1\,,
$$
$t\in [0,1]$, which is a continuous path between $\Psi^*\eta $ and the eigenform
$\sin(x_1) \dif x_2 + \cos(x_1) \dif  x_3$ which is isometrically related to $\eta_1$.
Since
$$
\alpha_t \wedge \dif \alpha_t = \sqrt{t+1} dx_1 \wedge dx_2 \wedge dx_3 > 0\,,
$$
we conclude that $\eta$ is contactomorphic to $\eta_1$.
\end{re}

To prove the second part of Theorem~\ref{maint3}, following~\cite{kom} we consider an $\Ss^1$-invariant ansatz for curl eigenfields on $\Tt^3$:
\begin{equation}\label{ansatzT3}
V=\frac{\partial f}{\partial x_2} \partial_{x_1} -\frac{\partial f}{\partial x_1} \partial_{x_2} +  \lambda f \ \partial_{x_3}\,,
\end{equation}
where $f\equiv f(x_1, x_2)$ is a function on $\Tt^2$. It is straightforward to check that $V$ is an eigenfield of $\cu$ with eigenvalue $\lambda$ if and only if $f$ is an eigenfunction of the Laplacian on $\mathbb T^2$ with eigenvalue $\Lambda=\lambda^2$  (see also~\cite[Theorem 3.4]{kom}).

If $f_\ell$, $\ell \in \mathbb N^*$, is an eigenfunction of the Laplacian on $\mathbb T^2$ with eigenvalue $\Lambda_\ell$ and regular nodal set, the previous discussion implies that the vector field $V_\ell$ of the form~\eqref{ansatzT3} (with $f\equiv f_\ell$) is a nonvanishing curl eigenfield of eigenvalue $\lambda_\ell:=\Lambda_\ell^{1/2}$ in $\mathbb T^3$. Assume also that the nodal set $\mathcal N_\ell$ of $f_\ell$ is disconnected and $\mathbb T^2\setminus \mathcal N_\ell$ has a component diffeomorphic to a disk for all $\ell$.

The contact form $\eta_\ell := V_\ell^\flat$ associated to $V_\ell$ is $\Ss^1$-invariant with respect to the $\Ss^1$-action on $\Tt^3$ whose infinitesimal generator is $Z:=\partial_{x_3}$. Defining the characteristic surface $\Gamma_Z^\ell$ as
\[
\Gamma_Z^\ell:=\{p \in \mathbb T^3 : Z \ \text{tangent to the contact distribution} \ \ker  \eta_\ell \ \text{at} \  p\}\,,
\]
Giroux characterization~\cite[Proposition 4.1a]{gir} implies that if $\Tt^2 \setminus \pi(\Gamma_{Z}^\ell)$ has a component diffeomorphic to a disk, the contact structure defined by $\eta_\ell$ is tight only if $\pi(\Gamma_{Z}^\ell)$ is connected. Here $\pi(x_1, x_2, x_3)=(x_1, x_2)$ is the projection onto $\Tt^2$. It is obvious that the projection of the characteristic surface $\pi(\Gamma_{Z}^\ell)$ is precisely the nodal set $\mathcal N_\ell$ of $f_\ell$, so by hypothesis, $\mathcal N_\ell$ is disconnected and $\mathbb T^2\setminus \mathcal N_\ell$ has a component diffeomorphic to a disk, thus implying that the contact structure defined by $\eta_\ell$ is overtwisted for all $\ell$. Since Proposition~\ref{nodalcirc} establishes the existence of an infinite sequence of eigenvalues $\Lambda_\ell$ and corresponding eigenfunctions $f_\ell$ satisfying the required assumptions, Theorem~\ref{maint3} follows.

\section{Examples and final remarks}\label{S:final}

\subsection{Existence of overtwisted curl eigenfields on $\mathbb S^3$}\label{SS:Raf}

As suggested to us by R. Komendarczyk (private communication), the existence of overtwisted contact structures on $\Ss^3$ associated with nonvanishing curl eigenfields can be established using an $\Ss^1$-invariant ansatz akin to the one used in Section~\ref{S:torus} for $\mathbb T^3$:
\begin{equation} \label{raf}
V = \lambda f R + X_2(f) X_1 - X_1(f) X_2\,,
\end{equation}
where we recall that $\{R,X_1,X_2\}$ is the orthonormal global frame of Hopf vector fields introduced in Section~\ref{S:sphere}. Here we assume that the function $f$ is a first integral of the Hopf field $R$. If $f$ is an eigenfunction of the Laplacian on $\mathbb S^3$ with eigenvalue $\lambda(\lambda-2)$ then $V$ is an eigenfield of $\cu$ with eigenvalue $\lambda$. Let us elaborate on this.

Indeed, following~\cite[Theorem 3.4]{kom} (see also~\cite[Equation (2.6)]{pss}), we notice that $R$ is the infinitesimal generator of the fibers of $\Ss^3$ as circle bundle over $\Ss^2$, with projection map $\pi$ (the Hopf map). A function $f$ is called \textit{basic} if it is a first integral of $R$, i.e., $R\cdot \nabla f=0$, which means that $f=\ov{f} \circ \pi$ for some function $\ov{f}$ on $\Ss^2$. As $\pi: \Ss^3 \to \Ss^2\big(\frac{1}{2}\big)$ is a Riemannian submersion, we have that $\Delta_{\Ss^3}f = (\Delta_{\Ss^2}\ov{f}) \circ \pi$ so $f$ is an eigenfunction of the Laplacian on $\mathbb S^3$ if and only if $\ov{f}$ is an eigenfunction of the Laplacian on $\mathbb S^2\big(\frac12\big)$ (understood as the sphere of radius $1/2$ in $\mathbb R^3$). We observe that the spectrum of the Laplacian on $\Ss^2\big(\frac{1}{2}\big)$ is given by $\{4k(k+1):k\in \mathbb N\}$, and hence it is of the form $\lambda(\lambda-2)$ ($\lambda>0$) if and only if $\lambda=2+2k$, so all the eigenfields we produce with this method have even eigenvalue. Notice also that the $\Ss^1$-invariance of the vector field~\eqref{raf} means that $V$ is a basic vector field (i.e. $[R, V]=0$), so it projects down to a vector field on $\Ss^2$.

If, in addition, the nodal set of the eigenfunction $\ov{f}:\Ss^2\big(\frac12\big) \to \RR$ is non-empty and regular, which is the case for a generic eigenfunction, cf. Proposition~\ref{density} (the fact that the nodal set is non-empty is immediate because any eigenfunction with positive eigenvalue has zero mean), then the vector field $V$ in~\eqref{raf} is non-vanishing and, since the projection of the characteristic surface $\pi(\Gamma_R)$ (see the proof of Proposition~\ref{P:OT} for its definition) coincides with the nodal set of $\ov{f}$, the induced contact structure is overtwisted by Giroux's characterization of tight $\Ss^1$-invariant contact structures~\cite[Proposition 4.1b]{gir} in the particular case of $\mathbb S^3$.

Summarizing, this construction gives for each even eigenvalue $\lambda=2m$ of the curl on $\mathbb S^3$ a plethora of nonvanishing eigenfields that are associated with overtwisted contact structure. However, if we want to compute the corresponding homotopy classes, i.e., the Hopf invariant of the associated maps $\varphi_m$ as introduced before Lemma~\ref{L:comp}, we have to use concrete eigenfunctions $\ov{f}$ of the Laplacian on $\mathbb S^2$, as done in Section~\ref{S:sphere}; in general, this is a difficult task.

\subsection{Other examples in $\mathbb S^3$ and $\mathbb T^3$}

The nonvanishing curl eigenfields constructed in Section~\ref{S:sphere} (and Subsection~\ref{SS:Raf}) for every even eigenvalue obviously do not exhaust all nonvanishing eigenfields on $\mathbb S^3$. The following shows an example, but it turns out to be contact homotopic to $V_2$ (the curl eigenfield constructed in Section~\ref{S:sphere} with eigenvalue~4). It would be interesting to know whether there exist nonvanishing curl eigenfields on $\mathbb S^3$ associated with overtwisted contact structures with Hopf invariant different from $0$ or $-1$.

\begin{ex}\label{nonKKPS2}
In terms of the global orthonormal frame $\{R,X_1,X_2\}$ introduced in Section~\ref{S:sphere}, the vector field
$$V=\tfrac{1}{2} (x_1^2 + 4 x_1 x_2 - x_2^2 + 2 (y_1^2 - y_2^2))R-(x_2 y_1 + x_1 y_2)X_1-(x_1^2 - x_2^2 + y_1 y_2)X_2\,,$$
(understood as the restriction to $\mathbb S^3$) is a nonvanishing curl eigenfield with eigenvalue $4$, and is neither of the form~\eqref{kkps} nor \eqref{raf}. By direct inspection we find out that $V$ and $V_2$ have only points where they are negatively aligned (i.e. $C_{+}=\emptyset$). Then, by Proposition~\ref{gencont}, we conclude that the contact structure induced by $V$ is contact homotopic to the one induced by $-V_2$, which is overtwisted and of Hopf invariant $-1$.
\end{ex}

Using Proposition~\ref{gencont}, we can also construct nonvanishing curl eigenfields on $\mathbb T^3$ different from those presented in~\eqref{genericbelt3}, but still associated with tight contact structures. It is interesting to compare this example with the following \textit{tightness criterion} that can be deduced from~\cite[Theorem 1.5]{ekm} along the same lines as~\cite[Theorem 1.9]{ekm}: on the flat 3-torus, a non-vanishing curl eigenfield $V$ is associated with a tight contact structure if $d_g:=\max_{\Tt^3}\abs{\nabla \ln \abs{V}} \in [0, \pi^{-1}]$.

\begin{ex}\label{torus_ex}
$V=\sin mx_3 \dif x_1 +\cos mx_3 \dif x_2$ and $W=-\sin mx_2 \dif x_1 +\cos mx_2 \dif x_3$ are nonvanishing Beltrami fields for the eigenvalue $m \in \NN$ on $\mathbb T^3$. By direct inspection we find out that $V$ and $W$ have collinearity points and, since they have unit norm, the constant in~\eqref{eq.c0} is $c_0=1$. By Proposition \ref{gencont}, the contact structure induced by the Beltrami field $V+cW$, for any $\abs{c}<1$, is contact homotopic to the one induced by $V$ which is the $m^{th}$ standard tight contact structure in \eqref{standardT3}; compare with \cite[Proposition 4.8]{etn}. Notice that $V+cW$ is a (non-vanishing if $\abs{c}<1$) curl eigenfield of the form \eqref{ansatzT3} after an obvious permutation of coordinates; in particular, the tightness of the induced contact structure can be equally proved using Giroux characterization \cite[Proposition 4.1b]{gir}. We also remark that $V+cW$ (and its dual 1-form $\alpha$) has  non-constant norm and that $d_g:=\max_{\Tt^3}\abs{\nabla \ln \abs{\alpha}} =\frac{m c}{1- c^2}$ may lie outside the aforementioned ``tightness interval'' $[0, \pi^{-1}]$, for each $m \in \NN^*$ and $c \in(c_1,1)$ with $c_1$ appropriately chosen, depending on $m$.
\end{ex}

\subsection{An application to planar open book decompositions}

Recall~\cite{etnbook} that any overtwisted contact structure on a closed 3-manifold is supported by a planar open book decomposition. As an explicit realization of this fact, we have the following result for $V_2$ and $V_3$. An analogous computation of the corresponding planar open book decomposition is feasible (but tedious) for all $V_m$, $m\geq 2$. We observe that $\widetilde \pi$ of the second planar open book below is not a Milnor fibration.

\begin{pr}
The contact structure associated with the nonvanishing curl eigenfield $V=-\frac{3}{2}V_2$ is supported by the standard open book
$$\pi_{-}: \Ss^3 \setminus B \to \Ss^1\,, \qquad \pi_{-}(z_1, z_2)= \frac{z_1 \ov{z}_2}{\abs{z_1 \ov{z}_2}}$$
with (oriented) binding $B=H^{-}=\{ (z_1, z_2)\in \Ss^3 : z_1 \ov{z}_2 =0\}=\{s=0;\phi_1=t\}\cup \{s=\pi/2;\phi_2=-t\}$ (negative Hopf link).

The contact structure associated with the nonvanishing curl eigenfield $\widetilde V=2V_3$ is supported by the open book
\begin{equation*}
\widetilde \pi:\Ss^3\setminus B \to \Ss^1\,, \qquad
\widetilde \pi(z_1, z_2)= \frac{z_1 z_2(\ov{z}_1 - \ov{z}_2)^2}{\abs{z_1 z_2}\abs{z_1 - z_2}^2}
\end{equation*}
with (oriented) binding $\widetilde B=\{ (z_1, z_2)\in \Ss^3 : z_1 z_2(\ov{z}_1 - \ov{z}_2) =0\}=\{s=0;\phi_1=t\}\cup \{s=\pi/4; \phi_1=\phi_2=-t\}\cup \{s=\pi/2;\phi_2=t\}$, $t\in[0,2\pi)$.
\end{pr}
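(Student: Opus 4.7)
The strategy is to apply Giroux's support criterion for open book decompositions: an open book $(B, \pi)$ of a closed oriented $3$-manifold supports the contact form $\alpha$ if and only if $\alpha$ is strictly positive along the binding $B$ (with its page-induced orientation) and $d\alpha$ restricts to a positive area form on every page. Equivalently, $\alpha|_B > 0$ and $d\alpha \wedge \pi^*(d\theta) > 0$ as a $3$-form on $\Ss^3 \setminus B$, where $\theta$ denotes the standard angle on $\Ss^1$. Thus for each candidate open book I would (i) confirm that the given map is a fibration off its alleged binding with the prescribed orientation, (ii) check the sign of $\alpha$ on each component of $B$, and (iii) verify the global positivity of $d\alpha \wedge \pi^*(d\theta)$.

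For the first open book, $z_1 \bar z_2 = \tfrac{1}{2}\sin 2s\, e^{\ii(\phi_1-\phi_2)}$ in Hopf coordinates, so $\pi_-^*(d\theta) = d\phi_1 - d\phi_2$ is smooth on $\Ss^3 \setminus H^{-}$. Using Example~\ref{concretexi}, the $1$-form dual to $V=-\tfrac{3}{2}V_2$ is
\[
\alpha = \tfrac{1}{2}(3\cos 2s - 1)\cos^2 s\, d\phi_1 + \tfrac{1}{2}(3\cos 2s + 1)\sin^2 s\, d\phi_2\,,
\]
and evaluating on the oriented binding yields $\alpha(\partial_{\phi_1})|_{s=0} = 1$ and $\alpha(-\partial_{\phi_2})|_{s=\pi/2} = 1$. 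Computing $d\alpha$ explicitly and expanding $d\alpha \wedge (d\phi_1 - d\phi_2)$, one finds it equals a strictly positive function of $s$ times the ambient volume form $\cos s\sin s\, ds\wedge d\phi_1\wedge d\phi_2$, settling this case.

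For the second open book, the angular form decomposes as $\widetilde\pi^*(d\theta) = d\phi_1 + d\phi_2 + 2\, d\arg(\bar z_1 - \bar z_2)$, the last term being computed via $d\arg(u + \ii v) = (u\, dv - v\, du)/(u^2+v^2)$ applied to $u + \ii v = \cos s\, e^{-\ii\phi_1} - \sin s\, e^{-\ii\phi_2}$, whose modulus squared equals $1 - \sin 2s\cos(\phi_1-\phi_2) = |z_1 - z_2|^2$. One verifies the local open-book model near each of the three binding components; the only nontrivial check is at $\{s=\pi/4, \phi_1=\phi_2\}$, where $\bar z_1 - \bar z_2$ vanishes to first order, matched by the denominator $|z_1 - z_2|^2$, giving the correct $e^{\ii\theta}$ behaviour (and explaining why $\widetilde\pi$ is not of Milnor type). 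The dual $1$-form $\widetilde\alpha$ of $\widetilde V = 2V_3$ is read off Example~\ref{concretexi}, and binding positivity is established by evaluating on the three oriented components.

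The main obstacle is the final positivity check for the second open book: one must show that $d\widetilde\alpha \wedge \widetilde\pi^*(d\theta)$ is strictly positive everywhere on $\Ss^3 \setminus \widetilde B$. The cleanest route is to separate the contribution $d\widetilde\alpha \wedge (d\phi_1 + d\phi_2)$, which evaluates to $\cos^2 s \sin^2 s$ times a polynomial in $\cos 2s$ that one checks is positive, from the correction $2\, d\widetilde\alpha \wedge d\arg(\bar z_1 - \bar z_2)$, a rational trigonometric expression with denominator $|z_1 - z_2|^2$. To show that the sum remains positive, I would clear the denominator, reduce to proving the positivity of a trigonometric polynomial in $(s,\phi_1-\phi_2,\phi_1+\phi_2)$ off the vanishing locus of $\cos s \cdot \sin s \cdot |z_1 - z_2|$, and complete the verification via the substitutions $u = \cos 2s$, $v = \cos(\phi_1-\phi_2)$, reducing the problem to a two-variable polynomial positivity statement on a compact rectangle that can be settled explicitly.
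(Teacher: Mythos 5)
Your overall strategy for the first open book coincides with the paper's: both verify the two defining conditions of compatibility, namely $\alpha>0$ on the oriented binding and $\dif\alpha$ a positive area form on the pages (the paper restricts $\dif\alpha$ to an explicit parametrization of the $\phi$-page, you wedge with $\pi_-^*(\dif\theta)$ -- equivalent up to orientation bookkeeping). But precisely that bookkeeping contains a slip: with the orientation fixed in Section~\ref{S:sphere}, the volume form is $-\sin s\cos s\,\dif s\wedge\dif\phi_1\wedge\dif\phi_2$, not $+\cos s\sin s\,\dif s\wedge\dif\phi_1\wedge\dif\phi_2$ as you assert, and a direct computation gives $\dif\alpha\wedge(\dif\phi_1-\dif\phi_2)=4\sin s\cos s\,\dif s\wedge\dif\phi_1\wedge\dif\phi_2=-4\,\mathrm{vol}_g$ (consistently, $(\dif\phi_1-\dif\phi_2)(V)=-1<0$, so the rescaled Reeb field is \emph{negatively} transverse to the $\dif\theta$-cooriented fibres). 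So under your stated criterion the check as written would come out with the wrong sign. The fix is the one the paper adopts: orient the pages by requiring that the stated binding orientation be their induced boundary orientation (outward normal first), and verify $\dif\alpha>0$ against that orientation; you cannot simply declare $\dif\alpha\wedge\pi^*(\dif\theta)>0$ without pinning down how the fibre orientation relates to the binding.

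For the second open book you take a genuinely different, and substantially harder, route. The paper never computes $\dif\widetilde\alpha\wedge\widetilde\pi^*(\dif\theta)$; instead it writes $\widetilde\pi=e^{\ii\Theta}$ with an explicit phase $\Theta(s,\phi_1,\phi_2)$, integrates the (linear, explicit) flow of $\widetilde V$, and finds $\frac{\dif}{\dif t}\Theta=\cos^2 2s/\bigl(1-\cos(t\cos 2s)\sin 2s\bigr)$, which is manifestly positive; since $\widetilde V$ is also checked to be positively tangent to the three binding circles, compatibility follows from the Reeb-field characterization. Your plan -- clear the denominator $\abs{z_1-z_2}^2$ and reduce to a two-variable polynomial positivity in $(\cos 2s,\cos(\phi_1-\phi_2))$ -- is sound in principle (everything does depend only on $s$ and $\phi_1-\phi_2$), but the decisive positivity is deferred to a computation "that can be settled explicitly" and never carried out. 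That positivity \emph{is} the mathematical content of the second half of the proposition, so as it stands this is a gap rather than a proof; either execute the polynomial estimate or switch to the paper's trick of differentiating the phase along the integral curves of $\widetilde V$, which collapses the whole verification to one visibly positive expression.
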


\begin{proof}
Using the explicit formula of $V_2$ presented in Example~\ref{concretexi}, it is easy to check that the vector field $V$ is (positively) tangent to $B$ and transverse to the pages of $\pi_{-}$, thus proving the fact that the contact structure given by the dual 1-form $\alpha = V^\flat$ is supported by (or compatible with) the planar open book $(\pi_-, H^-)$. Indeed, $\alpha$ has the form:
$$
\alpha=(3\cos^2 s-2)\cos^2 s\,\dif \phi_1+(3\cos^2 s-1)\sin^2 s\,\dif \phi_2\,,
$$
and for each $\phi\in \Ss^1$, the $\phi$-page of the open book $\pi_-$ can be parameterized using the coordinates $(s,\phi_1)$ as
$$
\{(\cos s\, e^{\ii \phi_1},\sin s\, e^{\ii (\phi_1-\phi)}):(s,\phi_1)\in(0,\pi/2)\times [0,2\pi)\}\,.
$$
A straightforward computation shows that $\dif \alpha$ restricted to the $\phi$-page is
$$
d\alpha|_{\phi-\text{page}}=-2\sin 2s\,\dif s\wedge \dif \phi_1\,,
$$
so taking into account that $\{-\partial_s,\partial_{\phi_1}-\partial_{\phi_2}\}$ is a positively oriented global frame on each page of the open book (notice that the outward vector field on the sphere is $-\partial_s$, and $\partial_{\phi_1}-\partial_{\phi_2}$ is a vector field positively aligned with the negative Hopf link on $B$), we conclude that
$$
d\alpha|_{\phi-\text{page}}(-\partial_s,\partial_{\phi_1}-\partial_{\phi_2})=2\sin 2s >0\,.
$$
Therefore, $\dif \alpha$ restricts to an area form on the pages of the open book.

Finally, we observe that $\alpha=\dif \phi_1$ on $\{s=0\}$ and $\alpha=-\dif\phi_2$ on $s=\pi/2$, so
$$
\alpha|_B(\partial_{\phi_1}-\partial_{\phi_2})>0\,,
$$
thus proving that the contact structure is positively transverse to $B$. These two conditions are precisely the definition of the compatibility between a contact structure and an open book, and the first part of the proposition follows.

For the case of $\widetilde V$, we can also use the explicit formula of $V_3$ in Example~\ref{concretexi} to show that $\widetilde V$ is (positively) tangent to $\widetilde B$ and (positively) transverse to the pages of $\widetilde \pi$, thus proving that the contact structure associated to $\widetilde V$ is supported by (or compatible with) the planar open book $(\widetilde \pi, \widetilde B)$. Indeed, $\widetilde V \big \vert _{s=0}=\partial_{\phi_1}$, $\widetilde V\big \vert _{s=\pi/2}=\partial_{\phi_2}$ and $\widetilde V \big \vert _{s=\pi/4}=-\frac{1}{2}\partial_{\phi_1} - \frac{1}{2}\partial_{\phi_2}$ so $\widetilde V$ is (positively) tangent to $\widetilde B$. Moreover, noticing that the open book in coordinates $(s,\phi_1,\phi_2)$ takes the form
$$
\widetilde \pi(\cos s e^{\ii \phi_1}, \sin s e^{\ii \phi_2})= e^{\ii \Theta(s,\phi_1,\phi_2)}
$$
with
$$
\Theta(s,\phi_1,\phi_2):=\phi_1 + \phi_2 - 2\arctan\frac{\cos s \sin \phi_1 - \sin s \sin \phi_2}{\cos s \cos \phi_1 - \sin s \cos \phi_2}\,,
$$
and the integral curves of $\widetilde V$ for a point $s\in(0,\pi/4)\cup (\pi/4,\pi/2)$, $\phi_1=\phi_2=0$, are given by
$$
\phi_1(t)=\Big(\frac32 -6\cos^2 s+5\cos^4 s\Big)t\,,\qquad \phi_2(t)=\Big(\frac12 -4\cos^2 s+5\cos^4 s\Big)t\,,
$$
a straightforward computation shows that
$$
\frac{d\Theta(s,\phi_1(t),\phi_2(t))}{dt}=\frac{\cos^2 2s}{1-\cos(t\cos 2s)\sin 2s}>0\,,
$$
thus implying that $\widetilde V$ is positively transverse to the pages of the open book, as we wanted to prove.
\end{proof}

\subsection{Nonvanishing Beltrami fields with nonconstant proportionality factor} Throughout this work we have considered nonvanishing curl eigenfields. In the following, we provide examples of Beltrami fields for which the proportionality factor is no longer constant. We shall see that they induce tight contact structures.
%SPECULATION: We know that such Beltrami are rare and  tight contact are also "rare"

\begin{ex}
The following family (depending on two parameters $k$ and $\ell$) of nonvanishing, globally defined smooth vector fields on $\Ss^3$,
$$
V=\frac{1}{k^2 \sin^2 s+ \ell^2 \cos^2 s}(\ell \partial_{\phi_1} + k \partial_{\phi_2})\,, \qquad k, \ell \in \NN\,, k \ell \neq 0\,,
$$
are Beltrami fields with (nonvanishing, nonconstant) proportionality factor
$$f(\cos s e^{i\phi_1}, \sin s e^{i\phi_2})=\frac{2 k \ell}{k^2 \sin^2 s+ \ell^2 \cos^2 s}\,.$$
Notice that $f=2k\ell\abs{V}^2$. The induced contact structures are tight, as it can be checked using the contact homotopy
$$
\alpha_t=\frac{(1+t(\ell-1))\cos^2 s \ \dif \phi_1 + (1+t(k - 1)) \sin^2 s \ \dif \phi_2}{(1+t(\ell-1))^2 \cos^2 s + (1+t(k - 1))^2 \sin^2 s}.
$$
between the $1$-form $V^\flat$ and the standard contact form $\eta$ on $\mathbb S^3$ (see Section~\ref{S:sphere}).
\end{ex}

\begin{ex}\label{ex_final}
Applying~\cite[Theorem 1.9]{ekm} we can deduce that the following nonvanishing Beltrami field on $\mathbb T^3$ (with nonconstant proportionality factor) induces a tight contact structure
$$
V=\cos(F(x_3) - \tfrac{\pi}{4}) \partial_{x_1} + \sin(F(x_3) - \tfrac{\pi}{4}) \partial_{x_2}\,,
$$
where $F$ is any function such that $F'$ is periodic and $F'(x_3) <0$. It can be easily checked that $\cu V = f V$ with $f(x_3)=-F'(x_3)$. In the particular case $F(x_3)=-2x_3+ \cos x_3$, the induced contact form $\alpha:=V^\flat$ is contact homotopic to a unit length eigenfield of curl with eigenvalue $2$, through the explicit homotopy:
$$\alpha_t= \tfrac{1}{\sqrt{2}}\big(\cos (2x_3 -t\cos x_3)-\sin (2x_3 -t\cos x_3)\big)\dif x_1 - \tfrac{1}{\sqrt{2}}\big(\cos (2x_3 -t\cos x_3)+\sin (2x_3 -t\cos x_3)\big)\dif x_2\,.$$
This homotopy is contact because
$$\alpha_t\wedge \dif \alpha_t=(2+t\sin x_3)\dif x_1\wedge \dif x_2\wedge \dif x_3\,.$$
Therefore $\alpha=\alpha_1$ is contact isotopic to
$$\alpha_0=\tfrac{-1}{\sqrt 2}\eta_2+\tfrac{1}{\sqrt 2}(\cos 2x_3 \dif x_1-\sin 2x_3 \dif x_2)\,,$$
where $\eta_2$ is defined in~\eqref{standardT3}. Since $\alpha_0$ is the sum (up to a constant proportionality factor) of $\eta_2$ and a pointwise perpendicular curl eigenform with eigenvalue $\lambda=2$, an easy application of Proposition~\ref{gencont} implies that it is contact homotopic to $\eta_2$ ($\alpha_0$ and $\eta_2$ do not align at any point). In conclusion, the induced contact form $\alpha$ is in the second tight contactomorphic class.
\end{ex}

\section*{Acknowledgments}
The authors are extremely grateful to John Etnyre, Rafal Komendarczyk and Patrick Massot for their very useful remarks and suggestions on a previous version of the manuscript and to Willi Kepplinger for pointing out to us an imprecision in the former statement of Proposition \ref{gencont}. D.P.-S. was supported by the grants MTM PID2019-106715GB-C21 (MICINN) and Europa Excelencia EUR2019-103821 (MCIU). This work was partially supported by the ICMAT--Severo Ochoa grant CEX2019-000904-S.

\end{document}